\numberwithin{equation}{section}
\newtheorem{theorem}{Theorem}[section]
\theoremstyle{definition}
\newtheorem{remark}[theorem]{Remark}
\newcolumntype{C}[1]{>{\centering\arraybackslash}m{#1}}
\date{\vspace{-6ex}}
\begin{document}

\newcommand{\Question}[1]{{\marginpar{\color{blue}\footnotesize #1}}}
\newcommand{\blue}[1]{{\color{blue}#1}}
\newcommand{\red}[1]{{\color{red} #1}}

\newif \ifNUM \NUMtrue

\title{Analytical solutions to some generalized and polynomial eigenvalue problems}
\author{Quanling Deng\thanks{Department of Mathematics, University of Wisconsin--Madison, Madison, WI 53706, USA. E-mail addresses: quanling.deng@math.wisc.edu; qdeng12@gmail.com. Online open access: \textit{Special Matrices}, 9(1), 240-256. https://doi.org/10.1515/spma-2020-0135.}
}

\maketitle

\begin{abstract}
It is well-known that the finite difference discretization of the Laplacian eigenvalue problem $-\Delta u = \lambda u$ leads to a matrix eigenvalue problem (EVP) $A x= \lambda x$ where the matrix $A$ is Toeplitz-plus-Hankel. Analytical solutions to tridiagonal matrices with various boundary conditions are given in a recent work of Strang and MacNamara. 
We generalize the results and develop analytical solutions to certain generalized matrix eigenvalue problems (GEVPs) $A x= \lambda Bx$ which arise from the finite element method (FEM) and isogeometric analysis (IGA). The FEM matrices are corner-overlapped block-diagonal while the IGA matrices are almost Toeplitz-plus-Hankel. 
In fact, IGA with a correction that results in Toeplitz-plus-Hankel matrices gives a better numerical method. 
In this paper, we focus on finding the analytical eigenpairs to the GEVPs while developing better numerical methods is our motivation. 
Analytical solutions are also obtained for some polynomial eigenvalue problems (PEVPs). Lastly, we generalize the eigenvector-eigenvalue identity (rediscovered and coined recently for EVPs) for GEVPs and derive some trigonometric identities. 
\end{abstract}
%

\paragraph*{Keywords}
eigenvalue, eigenvector, Toeplitz, Hankel, block-diagonal

\section{Introduction} \label{sec:intr}

It is well-known that the following tridiagonal Toeplitz  matrix
\begin{equation*}
A = 
\begin{bmatrix}
2 & -1 &  \\
-1 & 2 & -1 &  \\
   & \ddots & \ddots & \ddots &  \\
  &  & -1 & 2 & -1  \\
&& & -1 & 2  \\
\end{bmatrix}_{n\times n}
\end{equation*}
has analytical eigenpairs $(\lambda_j, x_j)$ with $x_j = (x_{j,1}, \cdots, x_{j, n})^T$ where
\begin{equation*}
\lambda_j = 2  - 2 \cos(j \pi h), \quad x_{j,k} = c \sin(j\pi k h), \quad h = \frac{1}{n+1}, \quad  j, k =1,2,\cdots, n
\end{equation*}
with $0 \ne c \in \mathbb{R}$ (we assume that $c$ is a nonzero constant throughout the paper); see, for example, \cite[p. 514]{meyer2000matrix} for a general case of tridiagonal Toeplitz matrix. Following the constructive technique in \cite[p. 515]{meyer2000matrix} that finds the analytical solutions to the matrix eigenvalue problem (EVP) $A x= \lambda x$, one can derive analytical solutions to the generalized matrix eigenvalue problem (GEVP) $A x= \lambda B x$ where $B$ is an invertible tridiagonal Toeplitz matrix. 
For example, let
\begin{equation*}
B = 
\begin{bmatrix}
 \frac{2}{3} & \frac{1}{6} \\[0.2cm]
\frac{1}{6} & \frac{2}{3} & \frac{1}{6} \\[0.2cm]
 & \ddots & \ddots & \ddots &  \\[0.2cm]
&  & \frac{1}{6} & \frac{2}{3} & \frac{1}{6}  \\[0.2cm]
&& & \frac{1}{6} & \frac{2}{3}  \\
\end{bmatrix}_{n\times n},
\end{equation*}
then, the GEVP $A x= \lambda B x$ has analytical eigenpairs $(\lambda_j, x_j)$ with $x_j = (x_{j,1}, $ $ \cdots,  x_{j, n})^T$ where (see \cite[Sec. 4]{hughes2008duality} for a scaled case; $A$ is scaled by $1/h$ while $B$ is scaled by $h$)
\begin{equation*}
\lambda_j = - 6  + \frac{18}{2 + \cos(j \pi h)}, \quad x_{j,k} = c \sin(j\pi k h), \quad h = \frac{1}{n+1}, \quad  j, k =1,2,\cdots, n.
\end{equation*}

These matrices or their scaled (by constants) versions arise from various applications. For example, the matrix $A$ arises from the discrete discretizations of the 1D Laplace operator by the finite difference method (FDM, cf., \cite{smith1985numerical,strang1988linear}, scaled by $1/h^2$) or the spectral element method (SEM, cf., \cite{patera1984spectral}, scaled by $1/h$). The work \cite{strang2014functions} showed that functions of the (tridiagonal)  matrices that arise from finite difference discretization of the Laplacian with different boundary conditions are Toeplitz-plus-Hankel. Similar analytical results for tridiagonal matrices from finite difference discretization are obtained in \cite{chang2009exact}. 
 The matrix $B$ (and also $A$) arises from the discrete discretization by the finite element method (FEM, cf., \cite{ciarlet2002finite,hughes2012finite,strang1973analysis}, scaled by $h$).

In general, it is difficult to find analytical solutions to the EVPs. The work by Losonczi \cite{losonczi1992eigenvalues} gave analytical eigenpairs to the EVPs for some symmetric and tridiagonal matrices. A new proof of these solutions was given by da Fonseca \cite{da2007eigenvalues}. 
%
%
We refer to the recent work \cite{da2019eigenpairs} for a survey on the analytical eigenpairs of tridiagonal matrices. 
For pentadiagonal and heptadiagonal matricies, finding analytical eigenpairs becomes more difficult. The work \cite{barrera2017asymptotics} derived asymptotical results of eigenvalues for pentadiagonal symmetric Toeplitz matrices. Some spectral properties were found in \cite{fasino1988spectral} for some pentadiagonal symmetric matrices. A recent work by An{\dj}eli\'c and da Fonseca \cite{andjelic2020some} presents some determinantal considerations for pentadiagonal matrices. The work \cite{solary2013finding} gave analytical eigenvalues (as the zeros of some complicated functions) for heptadiagonal symmetric Toeplitz matrices.

To the best of the author's knowledge, there are no widely-known articles in literature which address the issue of finding analytical solutions to either the EVPs with more general matrices (other than the ones mentioned above) or the GEVPs. The articles \cite{hughes2008duality,hughes2014finite,calo2019dispersion} present analytical solutions (somehow implicitly) to GEVPs for some tridiagonal and/or pentadiagonal matrices that arise from the isogeometric analysis (IGA) of the 1D Laplace operator. For heptadiagonal and more general matrices, no analytical solutions exist and the numerical approximations in \cite{hughes2014finite,calo2019dispersion,deng2018dmm} can be considered as asymptotic results for certain structured matrices arising from the numerical discretizations of the differential operators.

In this paper, we present analytical solutions to GEVPs for mainly two classes of matrices. The first class is the Toeplitz-plus-Hankel matrices while the second class is the corner-overlapped block-diagonal matrices. The main insights for the Toeplitz-plus-Hankel matrices are from the numerical spectral approximation techniques where a particular solution form such as, the well-known Bloch wave form, is sought. For the corner-overlapped block-diagonal matrices, we propose to decompose the original problem into a lower two-block matrix problem where one of the blocks is a quadratic eigenvalue problem (QEVP). We solve the QEVP by rewriting the problem and applying the analytical results from the tridiagonal Toeplitz matrices. 
Generalization of finding solutions to polynomial eigenvalue problem (PEVP) is also given.
Additionally, Denton, Parke, Tao, and Zhang in a recent work \cite{denton2021eigenvectors} rediscovered and coined the eigenvector-eigenvalue identity for certain EVPs. We generalize this identity for the GEVPs. Based on these identities, we derive some interesting trigonometric identities.

The rest of the article is organized as follows.
Section~\ref{sec:mr} presents the main results, i.e., the analytical solutions to the two classes of matrices. Several examples are given and discussed. Section \ref{sec:ti} generalizes the eigenvector-eigenvalue identity and derives some trigonometric identities. 
Potential applications to the design of better numerical discretization methods for partial differential equations and other concluding remarks are presented in Section~\ref{sec:conclusion}.

\section{Main results} \label{sec:mr}

\subsection{Notation and problem statement}
Throughout the paper, we denote matrices and vectors by uppercase and lowercase letters, respectively. 
In particular, let $A \in \mathbb{C}^{n\times n}$ be a square matrix with entries denoted as $A_{jk},j,k=1,\cdots,n,$ and $x\in \mathbb{C}^{n}$ be a vector with entries denoted as $x_j,j=1,\cdots,n,$ in the complex field. 
We denote by $A^{(\alpha,m)}$ a matrix that the entries depend on a sequence of parameters $\alpha = (\alpha_0, \alpha_1, \cdots, \alpha_m).$
The superscript $\cdot^{(\alpha,m)}$ is omitted when the context is clear. 
We denote by $T^{(\alpha,m)} = (T^{(\alpha,m)}_{j,k}) \in \mathbb{C}^{n\times n}$ a symmetric and diagonal-structured Toeplitz matrix with entries
\begin{equation} \label{eq:T}
T^{(\alpha,m)}_{j,j+k} = 
\begin{cases}
\xi_{|k|}, \quad & |k| \le m, \quad k \in \mathbb{Z}, \quad  j = 1,\cdots, n, \\
0, \quad & \text{otherwise},
\end{cases}
\end{equation}
where $m\le n-1$ specifies the matrix bandwidth. 
Explicitly, this matrix can be written as
\begin{equation*}
\begin{aligned}
T^{(\alpha,m)} & = 
\begin{bmatrix}
\alpha_0   & \alpha_1   & \cdots   & \alpha_m  \\
\alpha_1    & \alpha_0 & \alpha_1  & \cdots  & \alpha_m   \\
\vdots  & \alpha_1  & \alpha_0 & \alpha_1  & \cdots   & \alpha_m   \\
\alpha_m  & \cdots  & \alpha_1  & \alpha_0 & \alpha_1  & \cdots   & \alpha_m   \\
 &\ddots & \ddots & \ddots & \ddots & \ddots & \ddots & \ddots  \\
&  & \alpha_m  & \cdots  &\alpha_1  & \alpha_0 & \alpha_1 & \cdots \\
&& &  \alpha_m  & \cdots  & \alpha_1  & \alpha_0 & \alpha_1   \\
& && &  \alpha_m  & \cdots  & \alpha_1  & \alpha_0    \\
\end{bmatrix}_{n\times n},
\end{aligned}
\end{equation*}
where the empty spots are zeros.
For a Hankel matrix, it appears to be different in each of the cases we consider in the paper
and we define it at its occurrence in the context.
\color{black}

For matrices $A, B \in \mathbb{C}^{n\times n}$, the EVP is to find the eigenpairs $(\lambda \in \mathbb{C}, x \in \mathbb{C}^n)$ such that
\begin{equation} \label{eq:evp}
A x= \lambda x
\end{equation}
while 
the GEVP is to find the eigenpairs $(\lambda \in \mathbb{C}, x \in \mathbb{C}^n) $ such that
\begin{equation} \label{eq:GEVP}
A x= \lambda B x.
\end{equation}

Throughout the paper,
we focus on GEVPs and since
the analytical eigenpairs for GEVPs with small dimensions are relatively easy to find,  we assume that 
the dimension $n$ is large enough for the generalization of matrices to make sense. 
 For simplicity, we slightly abuse the notation such as $A$ for a matrix and $(\lambda, x)$ for an eigenpair.  Once analytic eigenpairs for a GEVP  are found, eigenpairs for EVP $A x= \lambda x$ follows naturally by setting $B$ as an identity matrix.

\subsection{Toeplitz-plus-Hankel matrices}

In this section, we present analytical solutions to certain Toeplitz-plus-Hankel  matrices. 
The main insights are from the proof in finding analytical solutions to a tridiagonal Toeplitz matrix in \cite[p. 514]{meyer2000matrix} 
and the numerical spectral approximations of the Laplace operator (cf.,\cite{strang2014functions,hughes2008duality,hughes2014finite,calo2019dispersion}).
The main idea is to seek eigenvectors in a particular form such as the Bloch wave form $e^{a + \iota b}$, where $\iota^2=-1$ as in \cite{meyer2000matrix,hughes2008duality} and sinusoidal form as in \cite{hughes2014finite}. Our main contribution is the generalization of these results to GEVPs, especially, with larger matrix bandwidths.

We denote by $H^{(\alpha,m)} = (H^{(\alpha,m)}_{j,k}) \in \mathbb{C}^{n\times n}$ a Hankel matrix with entries
\begin{equation} \label{eq:h1}
H^{(\alpha,m)}_{j,k} = H^{(\alpha, m)}_{n-j+1,n-k+1} = 
\begin{cases}
 \alpha_{j+k}, \quad & k = 1, \cdots, m-j, \quad j =1, \cdots, m-1, \quad 2\le m\le n, \\
0, \quad & \text{otherwise},
\end{cases}
\end{equation}
which can be explicitly written as
\begin{equation*}
\begin{aligned}
H^{(\alpha,m)} & = 
\begin{bmatrix}
\alpha_2   & \alpha_3 & \alpha_4   & \cdots   & \alpha_m & 0 & \cdots \\
\alpha_3    & \alpha_4 &  \cdots  & \alpha_m & 0 & \ddots  \\
\alpha_4  &  \cdots  & \alpha_m & 0 & \ddots  \\
\vdots  & \alpha_m  &0 & \ddots \\
\alpha_m &0  &\ddots  \\
0 &\ddots  \\
\vdots  \\
\end{bmatrix}_{n\times n},
\end{aligned}
\end{equation*}
where the entries at the bottom-right corner are defined such that the matrix is persymmetric. 
Herein, a persymmetric matrix is defined as a square matrix which is symmetric with respect to the anti-diagonal. 
\color{black}
With this in mind, we give the following result.

\begin{theorem}[Analytical eigenvalues and eigenvectors, set 1]\label{thm:set1}
Let $n\ge2, 1\le m \le n-1$ and
$
A = T^{(\alpha,m)} - H^{(\alpha,m)}, B = T^{(\beta,m)} - H^{(\beta,m)}
$ 
with $T^{(\xi,m)}$ and $H^{(\xi,m)}$, $\xi = \alpha, \beta$,  defined in \eqref{eq:T} and \eqref{eq:h1}, respectively.
Assume that $B$ is invertible. Then, the GEVP \eqref{eq:GEVP} has eigenpairs $(\lambda_j, x_j)$ where
\begin{equation} \label{eq:set1}
\lambda_j = \frac{ \alpha_0 + 2 \sum_{l=1}^{m} \alpha_l \cos(l j\pi h) }{ \beta_0 + 2 \sum_{l=1}^{m} \beta_l \cos(l j\pi h)  }, \quad x_{j,k} = c \sin( j \pi k h), \quad h = \frac{1}{n+1}, \  j, k =1,2,\cdots, n.
\end{equation}
\end{theorem}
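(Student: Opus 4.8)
The plan is to verify directly that the proposed pairs $(\lambda_j, x_j)$ satisfy $A x_j = \lambda_j B x_j$, i.e., that $(T^{(\alpha,m)} - H^{(\alpha,m)}) x_j = \lambda_j (T^{(\beta,m)} - H^{(\beta,m)}) x_j$. Since $\lambda_j$ is defined as a ratio whose denominator is nonzero (this follows from $B$ being invertible, as I will note below), it suffices to show the single key identity
\begin{equation*}
\bigl(T^{(\gamma,m)} - H^{(\gamma,m)}\bigr) x_j = \Bigl(\gamma_0 + 2\sum_{l=1}^{m}\gamma_l \cos(lj\pi h)\Bigr)\, x_j
\end{equation*}
for an arbitrary parameter sequence $\gamma$, where $x_{j,k} = c\sin(j\pi k h)$ and $h = 1/(n+1)$. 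Applying this with $\gamma = \alpha$ and $\gamma = \beta$ and dividing then gives the claim. So the whole theorem reduces to computing the $k$-th component of $(T^{(\gamma,m)} - H^{(\gamma,m)})x_j$ and showing it equals $\bigl(\gamma_0 + 2\sum_{l=1}^m \gamma_l\cos(lj\pi h)\bigr) c\sin(j\pi k h)$.

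The core computation is the product–to–sum step. The $k$-th entry of $T^{(\gamma,m)}x_j$ would be $c\sum_{l=-m}^{m}\gamma_{|l|}\sin(j\pi(k+l)h)$ if the matrix were a full bi-infinite Toeplitz operator; expanding $\sin(j\pi(k+l)h) = \sin(j\pi k h)\cos(lj\pi h) + \cos(j\pi k h)\sin(lj\pi h)$ and using that $\sin(lj\pi h)$ is odd in $l$ while $\gamma_{|l|}$ is even, the cross terms telescope to zero, leaving exactly $c\sin(j\pi k h)\bigl(\gamma_0 + 2\sum_{l=1}^m\gamma_l\cos(lj\pi h)\bigr)$ — the desired eigenvalue times $x_{j,k}$. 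The real work is the boundary correction: for $k$ within distance $m$ of $1$ or of $n$, the Toeplitz matrix is missing the entries that would multiply "phantom" components $x_{j,0}, x_{j,-1},\dots$ (or $x_{j,n+1},\dots$), and this is precisely where the Hankel term $H^{(\gamma,m)}$ enters. The point of the choice $h = 1/(n+1)$ is that the sine vanishes at the phantom index $0$ and is odd about it: $\sin(j\pi\cdot 0\cdot h) = 0$ and $\sin(-j\pi r h) = -\sin(j\pi r h)$, and similarly $\sin(j\pi(n+1)h) = \sin(j\pi) = 0$ with reflection symmetry $\sin(j\pi(n+1+r)h) = -\sin(j\pi(n+1-r)h)$. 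I would show that the entries of $-H^{(\gamma,m)}$, by construction \eqref{eq:h1} and its persymmetric completion, supply exactly the terms $-\gamma_{\text{(appropriate index)}} x_{j,\text{(reflected index)}}$ needed to extend the truncated Toeplitz sum back to the full symmetric sum $\sum_{l=-m}^m$ after using these reflection identities; the phantom index $0$ (resp.\ $n+1$) contributes nothing because the sine is zero there, which is why no $\gamma$-term for that index appears in $H$.

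I expect the main obstacle to be purely bookkeeping: carefully matching the index ranges in the definition \eqref{eq:h1} of $H^{(\alpha,m)}$ — the range $k = 1,\dots,m-j$, $j = 1,\dots,m-1$, together with the persymmetric completion at the bottom-right — against the reflected-index terms $\sin(-j\pi r h)$ that arise when $k - l < 1$, and verifying the signs work out so that $-H$ (not $+H$) is the correct correction. This is a finite check localized to the first $m-1$ and last $m-1$ rows and is independent of $n$ once $n$ is large enough that these two bands do not overlap, i.e. $n \ge 2m-1$; for the edge cases of small $n$ relative to $m$ I would remark that the matrices are still well-defined and the same reflection argument applies, or simply invoke the standing assumption that $n$ is large. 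Finally I would dispatch the nonvanishing of the denominator: if $\beta_0 + 2\sum_{l=1}^m\beta_l\cos(lj\pi h) = 0$ for some $j$, then the identity just proved gives $B x_j = 0$ with $x_j \ne 0$, contradicting invertibility of $B$; hence $\lambda_j$ in \eqref{eq:set1} is well-defined and the eigenpair relation holds.
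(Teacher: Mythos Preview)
Your proposal is correct and follows essentially the same approach as the paper: direct verification that the sine vectors satisfy the eigenrelation by applying the addition formula $\sin(\phi\pm\psi)=\sin\phi\cos\psi\pm\cos\phi\sin\psi$ and checking that every row collapses to the same scalar equation independent of the row index. The paper's own proof is much terser (it simply asserts that ``one can verify'' the row reduction), whereas you have spelled out the boundary bookkeeping via the reflection identities and the role of the Hankel correction, and you additionally justify why the denominator is nonzero from the invertibility of $B$; these are welcome elaborations but not a different method.
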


\begin{proof}
Following \cite[p. 515]{meyer2000matrix}, one seeks eigenvectors of the form $c\sin( j \pi k h)$. Using the trigonometric identity $\sin(\phi \pm \psi) = \sin(\phi) \cos(\psi) \pm \cos(\phi) \sin(\psi)$, one can verify that each row of the GEVP \eqref{eq:GEVP}, $\sum_{k=1}^n A_{ik} x_{j,k} = \lambda \sum_{k=1}^n B_{ik} x_{j,k}, i=1,\cdots, n$,  reduces to $\alpha_0 + 2 \sum_{l=1}^{m} \alpha_l \cos(l j\pi h) = \lambda \big( \beta_0 + 2 \sum_{l=1}^{m} \beta_l \cos(l j\pi h) \big)$, which is independent of the row number $i$. Thus, the eigenpairs $(\lambda_j, x_j)$ given in \eqref{eq:set1} satisfies \eqref{eq:GEVP}. 
The GEVP has at most $n$ eigenpairs and the $n$ eigenvectors are linearly independent. This completes the proof.
\end{proof}

We remark that all the matrices defined above are symmetric and persymmetric. 
For $m=3$ and $n\ge4$, the matrix $A=T^{(\xi,m)} - H ^{(\xi,m)}$ is of the form
\begin{equation*}
\begin{aligned}
A & = 
\begin{bmatrix}
\alpha_0 - \alpha_2  & \alpha_1 - \alpha_3  & \alpha_2   & \alpha_3  \\
\alpha_1 - \alpha_3   & \alpha_0 & \alpha_1  & \alpha_2  & \alpha_3   \\
\alpha_2  & \alpha_1  & \alpha_0 & \alpha_1  & \alpha_2   & \alpha_3   \\
\alpha_3  & \alpha_2  & \alpha_1  & \alpha_0 & \alpha_1  & \alpha_2   & \alpha_3   \\
&\ddots & \ddots & \ddots & \ddots & \ddots & \ddots & \ddots  \\
&  & \alpha_3  & \alpha_2  &\alpha_1  & \alpha_0 & \alpha_1 & \alpha_2 \\
&& &  \alpha_3  & \alpha_2  & \alpha_1  & \alpha_0 & \alpha_1 - \alpha_3  \\
& && &  \alpha_3  & \alpha_2  & \alpha_1 - \alpha_3 & \alpha_0 - \alpha_2   \\
\end{bmatrix}_{n\times n},
\end{aligned}
\end{equation*}
where the  Hankel matrix  $H^{(\xi,m)}$ modifies the matrix at the first and last few rows. 
For $m=1$, both $A$ and $B$ are tridiagonal Toeplitz matrices. For $m\ge 2$, both $A$ and $B$ are Toeplitz-plus-Hankel matrices. This result generalizes the finite-difference matrix results in Strang and MacNamara in \cite{strang2014functions}.

We present the following example. Let $m=2, \alpha_0 = 1, \alpha_1 = -1/3, \alpha_2 = -1/6,$ $\beta_0 = 11/20, \beta_1 = 13/60, \beta_2 = 1/120.$ Then, we have the following matrices
\begin{equation*} 
\begin{aligned}
A & = 
\begin{bmatrix}
\frac{7}{6} & -\frac{1}{3} & -\frac{1}{6} \\[0.2cm]
 -\frac{1}{3} & 1 & -\frac{1}{3} & -\frac{1}{6} \\[0.2cm]
  -\frac{1}{6} & -\frac{1}{3} & 1 & -\frac{1}{3} & -\frac{1}{6} \\[0.2cm]
   & \ddots & \ddots & \ddots & \ddots & \ddots &  \\[0.2cm]
  & & -\frac{1}{6} & -\frac{1}{3} & 1 & -\frac{1}{3} & -\frac{1}{6} \\[0.2cm]
  & &  & -\frac{1}{6} & -\frac{1}{3} & 1 & -\frac{1}{3}  \\[0.2cm]
&&& & -\frac{1}{6} & -\frac{1}{3} & \frac{7}{6}  \\
\end{bmatrix},
B &=
\begin{bmatrix}
\frac{13}{24} & \frac{13}{60} & \frac{1}{120} \\[0.2cm]
\frac{13}{60} & \frac{11}{20} & \frac{13}{60} & \frac{1}{120} \\[0.2cm]
  \frac{1}{120} & \frac{13}{60} & \frac{11}{20} & \frac{13}{60} & \frac{1}{120} \\[0.2cm]
   & \ddots & \ddots & \ddots & \ddots & \ddots &  \\[0.2cm]
  & & \frac{1}{120} & \frac{13}{60} & \frac{11}{20} & \frac{13}{60} & \frac{1}{120} \\[0.2cm]
  & &  & \frac{1}{120} & \frac{13}{60} & \frac{11}{20} & \frac{5}{24} \\[0.2cm]
&&& & \frac{1}{120} & \frac{13}{60} & \frac{13}{24}  \\
\end{bmatrix}.
\end{aligned}
\end{equation*}

The eigenpairs of the GEVP \eqref{eq:GEVP} with these matrices are
\begin{equation} \label{eq:evpm2}
\lambda_j = -20 + \frac{240(3 + 2\cos(j \pi h) )}{33 + 26\cos(j\pi h) + \cos(2j \pi h)}, \quad x_{j,k} = c \sin(j\pi k h), \quad j, k =1,2,\cdots, n.
\end{equation}

If we scale the matrix $A$ by $1/h$ while the matrix $B$ by $h$, then the new system is a GEVP that arises from the IGA approximation of the  Laplacian eigenvalue problem $-\Delta u = \lambda u$ on $[0,1]$; see, for example, \cite[eqns. 118--123]{hughes2014finite} (the first two rows have slightly different entries $A_{11} = 4/3, A_{12} = A_{21} = -1/6, B_{11} = 1/3, B_{12} = B_{21} = 5/24$). 

Similarly, if we define a Hankel matrix  $H^{(\alpha,m)} = (H^{(\alpha,m)}_{j,k}) \in \mathbb{C}^{n\times n}$ with entries
\begin{equation} \label{eq:h2}
H^{(\alpha,m)}_{j,k} = H^{(\alpha, m)}_{n-j+1,n-k+1} = 
\begin{cases}
 \alpha_{j+k-1}, \quad &  k = 1, \cdots, m-j+1, \quad j =1, \cdots, m, \quad 1\le m\le n, \\
0, \quad & \text{otherwise},
\end{cases}
\end{equation}
one can shift the phase by a half and seek solutions of the form $\sin\big( j \pi (k-\frac{1}{2}) h \big)$. As a consequence, we have the following result. 
\color{black}

\begin{theorem}[Analytical eigenvalues and eigenvectors, set 2] \label{thm:set2}
 Let $n\ge2, 1\le m \le n-1$ and
$
A = T^{(\alpha,m)} - H^{(\alpha,m)}, B = T^{(\beta,m)} - H^{(\beta,m)}
$ 
with $T^{(\xi,m)}$ and $H^{(\xi,m)}$, $\xi = \alpha, \beta$,  defined in \eqref{eq:T} and \eqref{eq:h2}, respectively.

Assume that $B$ is invertible.  Then, the GEVP  \eqref{eq:GEVP}  has eigenpairs $(\lambda_j, x_j)$  where
\begin{equation}  \label{eq:set2}
\lambda_j = \frac{ \alpha_0 + 2 \sum_{l=1}^{m} \alpha_l \cos(l j\pi h) }{ \beta_0 + 2 \sum_{l=1}^{m} \beta_l \cos(l j\pi h)  }, \quad x_{j,k} = c \sin\big( j \pi (k-\frac{1}{2}) h \big), \ h = \frac{1}{n}, \ j, k =1,2,\cdots, n.
\end{equation}
\end{theorem}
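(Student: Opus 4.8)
The plan is to mirror the proof of Theorem~\ref{thm:set1} essentially verbatim, with the single modification that the ansatz for the eigenvector is the half-shifted sinusoid $x_{j,k} = c\sin\bigl(j\pi(k-\tfrac12)h\bigr)$ with $h = 1/n$ rather than $1/(n+1)$. First I would fix an index $j$ and substitute this ansatz into the $i$-th equation of the GEVP $\sum_{k=1}^n A_{ik}x_{j,k} = \lambda\sum_{k=1}^n B_{ik}x_{j,k}$. Because $A = T^{(\alpha,m)} - H^{(\alpha,m)}$ and $B = T^{(\beta,m)} - H^{(\beta,m)}$ have the same structural pattern, it suffices to analyze one generic matrix $M = T^{(\xi,m)} - H^{(\xi,m)}$ and show that $\sum_k M_{ik}x_{j,k} = \bigl(\xi_0 + 2\sum_{l=1}^m \xi_l\cos(lj\pi h)\bigr)x_{j,i}$ for every $i$; the ratio formula for $\lambda_j$ then follows immediately by dividing the $\alpha$-identity by the $\beta$-identity (using invertibility of $B$ to guarantee the denominator does not vanish on the relevant eigenvectors).

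The computation for interior rows $i$ (those with $m < i \le n-m$, where the Hankel part contributes nothing) is the routine one: using $\sin(\phi\pm\psi) = \sin\phi\cos\psi \pm \cos\phi\sin\psi$ one gets
\begin{equation*}
\sum_{l=-m}^{m}\xi_{|l|}\sin\bigl(j\pi(i+l-\tfrac12)h\bigr) = \Bigl(\xi_0 + 2\sum_{l=1}^m \xi_l\cos(lj\pi h)\Bigr)\sin\bigl(j\pi(i-\tfrac12)h\bigr),
\end{equation*}
since the $\cos\bigl(j\pi(i-\tfrac12)h\bigr)$ terms cancel in pairs $l$ and $-l$. The real content — and the step I expect to be the main obstacle — is verifying the boundary rows $1 \le i \le m$ (and, by persymmetry, $n-m+1 \le i \le n$). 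There the Toeplitz ansatz would produce spurious terms $\sin\bigl(j\pi(1-l-\tfrac12)h\bigr)$ for $l$ large enough that $i-l < 1$, i.e. arguments with nonpositive "index." The point of the particular Hankel matrix in \eqref{eq:h2}, whose $(j,k)$ entry is $\xi_{j+k-1}$, is precisely to cancel these: one needs the reflection identity $\sin\bigl(j\pi(1-r-\tfrac12)h\bigr) = -\sin\bigl(j\pi(r-\tfrac12)h\bigr)$, valid for $h = 1/n$ because $\sin\bigl(j\pi(1-r-\tfrac12)/n\bigr) = \sin\bigl(j\pi - j\pi(r-\tfrac12)/n\bigr)\cdot(\text{sign})$ — more carefully, $\sin(j\pi(\tfrac12 - r)h) = -\sin(j\pi(r-\tfrac12)h)$ is just oddness of sine, so the mirror point of $k=1-r$ across $k=\tfrac12$ is $k=r$ with a sign flip. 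This sign flip is exactly what converts a $-H$ contribution into a $+T$-type contribution, so the boundary rows collapse to the same scalar multiple as the interior rows. I would check $m=1$ and $m=2$ explicitly to make the pattern transparent and then state the general cancellation.

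Once every row of $Ax_j = \lambda_j Bx_j$ is verified, it remains to note that we have produced $n$ eigenpairs indexed by $j = 1,\dots,n$, that the vectors $x_j$ with entries $\sin\bigl(j\pi(k-\tfrac12)/n\bigr)$ are linearly independent (these are, up to normalization, the columns of a DCT-type orthogonal matrix, so the claim is standard), and that a GEVP with invertible $B$ has at most $n$ eigenpairs; hence these are all of them. I would close by remarking, as the paper does for set~1, that $h=1/n$ rather than $1/(n+1)$ reflects the Neumann-type boundary modification encoded in \eqref{eq:h2}, and that the eigenvalue formula is formally identical to \eqref{eq:set1} — only the eigenvectors and the meshsize change.
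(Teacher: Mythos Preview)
Your proposal is correct and follows exactly the approach the paper intends: the paper does not give a separate proof of Theorem~\ref{thm:set2} at all, stating only that ``the proof can be established similarly and we omit it for brevity,'' i.e.\ by rerunning the argument of Theorem~\ref{thm:set1} with the half-shifted ansatz. Your write-up actually supplies the details the paper skips (the oddness-based reflection $\sin\bigl(j\pi(\tfrac12-r)h\bigr)=-\sin\bigl(j\pi(r-\tfrac12)h\bigr)$ that makes the Hankel correction \eqref{eq:h2} cancel the ghost terms, and the DST-type linear independence of the $x_j$), so it is more complete than the paper's own treatment while remaining on the same track.
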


Herein, as an example, with $m=2$ and $n\ge 3$, the matrix $T^{(\alpha,m)} - H^{(\alpha,m)}$ is of the form
\begin{equation*}
\begin{bmatrix}
\alpha_0 - \alpha_1  & \alpha_1 - \alpha_2  & \alpha_2   \\
\alpha_1 - \alpha_2  & \alpha_0 & \alpha_1  & \alpha_2   \\
\alpha_2  & \alpha_1  & \alpha_0 & \alpha_1  & \alpha_2   \\
& \ddots & \ddots & \ddots & \ddots & \ddots  \\
&  & \alpha_2  &\alpha_1  & \alpha_0 & \alpha_1 - \alpha_2   \\
&& & \alpha_2  & \alpha_1 - \alpha_2  & \alpha_0 - \alpha_1   \\
\end{bmatrix}_{n\times n}.
\end{equation*}
The proof can be established similarly and we omit it for brevity. 
An example is the GEVP with the matrices in \cite[eqns. 118--123]{hughes2014finite}. These matrices satisfy the assumption of the above theorem and the analytical solutions are given by \eqref{eq:set2} with a scale $n^2$ to the eigenvalues. The eigenvectors of Theorems \ref{thm:set1} and \ref{thm:set2} correspond to the solutions of the  Laplacian eigenvalue problem on the unit interval. When $m\ge 3$, the results in Theorems \ref{thm:set1} and \ref{thm:set2} provide an insight to remove the outliers in high-order IGA approximations. 
We refer to \cite{hughes2008duality,hughes2014finite} for the outlier behavior in the approximate  spectrum 
and to \cite{deng2020boundary,deng2021outlier} for their eliminations.
Similarly, 
 we define a Hankel matrix  $H^{(\alpha,m)} = (H^{(\alpha,m)}_{j,k}) \in \mathbb{C}^{n\times n}$ with entries
\begin{equation} \label{eq:h3}
H^{(\xi,m)}_{j+1,k+1}  = H^{(\xi,m)}_{n-j,n-k} = 
\begin{cases}
-\xi_0/2, \quad &  j,k = 0, \\
\xi_{j+k}, \quad &  k = 1, \cdots, m-j, \ j =1, \cdots, m-1, m\ge2, \\
0, \quad & \text{otherwise}.
\end{cases}
\end{equation}
With the insights from the numerical methods for the Neumann eigenvalue problem on the unit interval, we have the following two sets of analytical eigenpairs.

\begin{theorem}[Analytical eigenvalues and eigenvectors, set 3] \label{thm:set3}

Let $n\ge2, 1\le m \le n-1$ and 
$
A = T^{(\alpha,m)} + H^{(\alpha,m)}, B = T^{(\beta,m)} + H^{(\beta,m)}
$ 
with $T^{(\xi,m)}$ and $H^{(\xi,m)}$, $\xi = \alpha, \beta$,  defined in \eqref{eq:T} and \eqref{eq:h3}, respectively.
Assume that $B$ is invertible.  Then, the GEVP  \eqref{eq:GEVP}  has eigenpairs $(\lambda_j, x_j)$ where
\begin{equation}  \label{eq:set3}
\lambda_{j+1} = \frac{ \alpha_0 + 2 \sum_{l=1}^{m} \alpha_l \cos(l j\pi h) }{ \beta_0 + 2 \sum_{l=1}^{m} \beta_l \cos(l j\pi h)  }, \ x_{j+1,k+1} = c \cos( j \pi kh ), \  j, k =0,1,\cdots, n-1
\end{equation}
 with $h = \frac{1}{n-1}$.
\end{theorem}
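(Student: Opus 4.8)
The plan is to follow the proof of Theorem~\ref{thm:set1} almost verbatim, the only genuine change being that the sine ansatz is replaced by the cosine ansatz dictated by the Neumann reflection principle, and that one must check that the corner correction $-\xi_0/2$ in~\eqref{eq:h3} does its job. Write $\theta_j = j\pi h$ with $h = 1/(n-1)$, and for $\xi\in\{\alpha,\beta\}$ put $\mu_\xi(j) = \xi_0 + 2\sum_{l=1}^m \xi_l\cos(l\theta_j)$, so that the asserted eigenvalue is $\lambda_{j+1} = \mu_\alpha(j)/\mu_\beta(j)$. Substituting $x_{j+1,k+1} = c\cos(k\theta_j)$, $k = 0,\dots,n-1$, into $Ax=\lambda Bx$, I would show row by row that $(Ax_{j+1})_i = \rho_i(j)\,\mu_\alpha(j)$ and $(Bx_{j+1})_i = \rho_i(j)\,\mu_\beta(j)$ for one and the same scalar $\rho_i(j)$ that is independent of the parameter sequence; the $i$-th equation then reads $\rho_i(j)\mu_\alpha(j) = \lambda\,\rho_i(j)\mu_\beta(j)$, which holds with $\lambda = \mu_\alpha(j)/\mu_\beta(j)$ independently of $i$.

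For the interior rows $m+1 \le i \le n-m$ the matrix $T^{(\xi,m)} + H^{(\xi,m)}$ coincides with $T^{(\xi,m)}$, so $\big((T^{(\xi,m)}+H^{(\xi,m)})x_{j+1}\big)_i = \sum_{l=-m}^{m}\xi_{|l|}\,c\cos\big((i-1+l)\theta_j\big)$, and the identity $\cos(\phi-\psi) + \cos(\phi+\psi) = 2\cos\phi\cos\psi$ (equivalently $\cos(\phi\pm\psi) = \cos\phi\cos\psi \mp \sin\phi\sin\psi$) collapses this to $\big(\xi_0 + 2\sum_{l=1}^m\xi_l\cos(l\theta_j)\big)\,c\cos\big((i-1)\theta_j\big) = \mu_\xi(j)\,x_{j+1,i}$; thus $\rho_i(j) = c\cos((i-1)\theta_j)$ there.

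The substance of the proof is in the first $m$ rows; the last $m$ then follow automatically because $A$ and $B$ are persymmetric and the cosine vector satisfies $x_{j+1,n+1-k} = (-1)^j x_{j+1,k}$ (since $(n-k)\theta_j = j\pi - (k-1)\theta_j$), which together give $(Ax_{j+1})_{n+1-i} = (-1)^j (Ax_{j+1})_i$ and likewise for $B$. For $1\le i\le m$ the missing entries of the bi-infinite Toeplitz stencil sit in "ghost" columns of index $\le 0$, whose natural cosine values $\cos((k-1)\theta_j)$ coincide, by evenness of the cosine, with those of the reflected columns $2-k\ge 2$; the off-diagonal part of $H^{(\xi,m)}$ in~\eqref{eq:h3} is precisely the collection of weights $\xi_l$ that re-injects these reflected contributions, while the corner term $-\xi_0/2$ removes the double counting of the diagonal entry that would otherwise occur at $i=1$. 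Performing the summation and using once more $\cos(\phi-\psi)+\cos(\phi+\psi) = 2\cos\phi\cos\psi$ together with $1 + \cos 2\psi = 2\cos^2\psi$ for the end term, one obtains $\big((T^{(\xi,m)}+H^{(\xi,m)})x_{j+1}\big)_i = \rho_i(j)\,\mu_\xi(j)$ with $\rho_1(j) = c/2$ and $\rho_i(j) = c\cos((i-1)\theta_j)$ for $2 \le i \le m$, the prefactor being the same for $\xi=\alpha$ and $\xi=\beta$. Consequently the isolated factor $1/2$ at $i=1$ (and, by persymmetry, at $i=n$) cancels in the ratio, and every row yields $\lambda = \mu_\alpha(j)/\mu_\beta(j)$.

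It remains to note that the GEVP has at most $n$ eigenpairs and that $x_1,\dots,x_n$, being the discrete-cosine vectors $(\cos(j\pi k h))_{k=0}^{n-1}$, $j = 0,\dots,n-1$, are linearly independent (they are orthogonal for the inner product that halves the two endpoint components, or one may invoke a Vandermonde argument since the numbers $\cos(j\pi h)$ are pairwise distinct); hence~\eqref{eq:set3} exhausts the spectrum. Note also that each $x_{j+1}$ has first component $c\ne 0$, so $Bx_{j+1} = \mu_\beta(j)\,(\rho_1(j),\dots,\rho_n(j))^T$, and the invertibility of $B$ forces $\mu_\beta(j)\ne 0$, making every $\lambda_{j+1}$ well defined. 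The only real obstacle is bookkeeping: tracking the shifted indexing of~\eqref{eq:h3} and verifying that the corner correction $-\xi_0/2$ together with the reflected off-diagonal entries reproduces, uniformly in $i=1,\dots,m$ and for both parameter sequences, the single prefactor $\rho_i(j)$ — once that is checked, the trigonometric collapse is routine.
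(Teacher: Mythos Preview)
Your proposal is correct and follows essentially the same route as the paper, which omits the proof of Theorem~\ref{thm:set3} on the grounds that it is established ``similarly'' to Theorem~\ref{thm:set1}: substitute the cosine ansatz and use the product-to-sum identity to collapse each row to a multiple of $\mu_\xi(j)$. Your write-up is in fact more explicit than the paper's, as you spell out the Neumann reflection that makes the Hankel block in~\eqref{eq:h3} re-inject the ghost columns for rows $2\le i\le m$, isolate the special prefactor $\rho_1(j)=c/2$ coming from the corner entry $-\xi_0/2$, and invoke persymmetry together with $x_{j+1,n+1-k}=(-1)^j x_{j+1,k}$ to transfer the verification to the last $m$ rows; these details are exactly what the paper's ``similarly'' is hiding.
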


The matrices can be complex-valued. For example, let $\iota^2 = -1, n=5, m=2, \alpha_0 = 8+2\iota, \alpha_1 = 5 - \iota, \alpha_2 = 2 \iota, \beta_0 = 6, \beta_1 = 3\iota, \beta_2 = 1-\iota.$ Then, with the setting in Theorem \ref{thm:set3},  we have the following matrices
\begin{equation} \label{eq:noh}
\begin{aligned}
A & = 
\begin{bmatrix}
4+\iota & 5-\iota & 2\iota & 0 & 0 \\
5-\iota & 8+4\iota & 5-\iota & 2\iota \\
2\iota & 5-\iota & 8+2\iota & 5-\iota & 2\iota \\
0 & 2\iota & 5-\iota & 8+4\iota & 5-\iota \\
0 & 0 & 2\iota & 5-\iota & 4+\iota
\end{bmatrix}, 
B & = 
\begin{bmatrix}
3 & 3\iota & 1-\iota & 0 & 0 \\
3\iota & 7-\iota & 3\iota & 1-\iota \\
1-\iota & 3\iota & 6 & 3\iota & 1-\iota \\
0 & 1-\iota & 3\iota & 7-\iota & 3\iota \\
0 & 0 & 1-\iota & 3\iota & 3
\end{bmatrix}.
\end{aligned}
\end{equation}
By direct calculations, the eigenpairs  of \eqref{eq:GEVP} are 
\begin{equation}
\begin{aligned}
\lambda_{1,2,3,4,5} & = 2- \frac{\iota}{2}, \frac{7-3\iota + (6-5\iota)\sqrt{2}}{9},  \frac75 - \frac65\iota, -\frac58+\frac38\iota, \frac{7-3\iota - (6-5\iota)\sqrt{2}}{9},\\
x_1 & = (1, 1, 1, 1, 1)^T, \\
x_2 & = (1, \frac{1}{\sqrt{2}}, 0, -\frac{1}{\sqrt{2}}, -1)^T, \\
x_3 & = (1, 0, -1, 0, 1)^T, \\
x_4 & = (1, -1, 1, -1, 1)^T, \\
x_5 & = (-1, \frac{1}{\sqrt{2}}, 0, -\frac{1}{\sqrt{2}}, 1)^T,
\end{aligned}
\end{equation}
which verifies Theorem \ref{thm:set3}.

 Now, we define a Hankel matrix $H^{(\alpha,m)} = (H^{(\alpha,m)}_{j,k}) \in \mathbb{C}^{n\times n}$ with entries
\begin{equation} \label{eq:h4}
H^{(\xi,m)}_{j,k}  = H^{(\xi,m)}_{n-j+1,n-k+1} = 
\begin{cases}
\xi_{j+k-1}, \quad & k = 1, \cdots, m-j+1, \quad j =1, \cdots, m, \quad 1\le m\le n, \\
0, \quad & \text{otherwise}.
\end{cases}
\end{equation}
We then have the following set of analytical eigenpairs.

\begin{theorem}[Analytical eigenvalues and eigenvectors, set 4] \label{thm:set4}

Let $n\ge2, 1\le m \le n-1$ and 
$
A = T^{(\alpha,m)} + H^{(\alpha,m)}, B = T^{(\beta,m)} + H^{(\beta,m)}
$ 
with $T^{(\xi,m)}$ and $H^{(\xi,m)}$, $\xi = \alpha, \beta$,  defined in \eqref{eq:T} and \eqref{eq:h4}, respectively.
Assume that $B$ is invertible.  Then,  the GEVP  \eqref{eq:GEVP} has eigenpairs $(\lambda_j, x_j)$ where
\begin{equation} \label{eq:set4}
\lambda_{j+1} = \frac{ \alpha_0 + 2 \sum_{l=1}^{m} \alpha_l \cos(l j\pi h) }{ \beta_0 + 2 \sum_{l=1}^{m} \beta_l \cos(l j\pi h)  }, \quad \ x_{j+1,k} = c \cos\big( j \pi (k-\frac12) h \big)
\end{equation}
with $h = \frac{1}{n}, k =1,2,\cdots, n, \ j=0,1,\cdots,n-1.$
\end{theorem}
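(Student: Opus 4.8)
The plan is to mimic the proof of Theorem~\ref{thm:set1} and its half-shift analogue Theorem~\ref{thm:set2}: substitute the candidate eigenvector directly into every row of the GEVP~\eqref{eq:GEVP} and show that each row collapses to one and the same scalar identity. Fix $j\in\{0,\dots,n-1\}$, set $h=1/n$, and let $x=x_{j+1}$ with $x_k=c\cos\!\big(j\pi(k-\tfrac12)h\big)$, extended to all $k\in\mathbb{Z}$ by the same formula. Two elementary facts do all the work. First, the product-to-sum identity $\cos(\phi-\psi)+\cos(\phi+\psi)=2\cos\phi\cos\psi$ gives, for every integer $i$,
\[
\sum_{l=-m}^{m}\alpha_{|l|}\,x_{i+l}=\Big(\alpha_0+2\sum_{l=1}^{m}\alpha_l\cos(l j\pi h)\Big)x_i ,
\]
and the same with $\alpha$ replaced by $\beta$. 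Second, the extended cosine sequence is even about $k=\tfrac12$ and about $k=n+\tfrac12$, i.e.
\[
x_{1-k}=x_k\qquad\text{and}\qquad x_{2n+1-k}=x_k ,
\]
the first because $\cos$ is even, the second because shifting $k$ by $2n$ changes the argument by $2j\pi$ (this is where $h=1/n$ enters).

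For an interior row $i$ with $m+1\le i\le n-m$, the matrices $A$ and $B$ act through the pure Toeplitz band, so the first identity shows that row $i$ of $Ax=\lambda Bx$ reads $\alpha_0+2\sum_l\alpha_l\cos(lj\pi h)=\lambda\big(\beta_0+2\sum_l\beta_l\cos(lj\pi h)\big)$, with no dependence on $i$. For a boundary row $i\le m$, the Toeplitz band would reach the fictitious columns $i-l\le 0$ for $l=i,\dots,m$; replacing $x_{i-l}$ by $x_{1-(i-l)}=x_{\,l-i+1}$ via the left reflection turns the missing piece $\sum_{l=i}^{m}\alpha_l x_{i-l}$ into $\sum_{k=1}^{m-i+1}\alpha_{i+k-1}x_k$, which is exactly the Hankel contribution $\sum_k H^{(\alpha,m)}_{i,k}x_k$ prescribed by \eqref{eq:h4}. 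Hence $(T^{(\alpha,m)}+H^{(\alpha,m)})x$ at row $i\le m$ equals $\sum_{l=-m}^{m}\alpha_{|l|}x_{i+l}$ evaluated on the extended sequence, and the first identity again yields the same scalar relation; the persymmetry clause $H_{j,k}=H_{n-j+1,n-k+1}$ in \eqref{eq:h4} disposes of the rows $i\ge n-m+1$ through the right reflection $x_{2n+1-k}=x_k$. This is precisely why the construction takes $A=T+H$ here, whereas Theorem~\ref{thm:set2} takes $A=T-H$: the sine sequence reflects oddly, $x_{1-k}=-x_k$, so there the fictitious terms re-enter with a minus sign. Since the identical computation applies to $B$, we obtain the exact relations $Ax=\big(\alpha_0+2\sum_l\alpha_l\cos(lj\pi h)\big)x$ and $Bx=\big(\beta_0+2\sum_l\beta_l\cos(lj\pi h)\big)x$; as $B$ is invertible and $x\neq0$, the second scalar is nonzero, $\lambda_{j+1}$ in \eqref{eq:set4} is well defined, and $(\lambda_{j+1},x_{j+1})$ solves \eqref{eq:GEVP}.

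It then remains to observe that the $n$ vectors $x_{j+1}=\big(\cos(j\pi(k-\tfrac12)h)\big)_{k=1}^{n}$, $j=0,\dots,n-1$, form a discrete cosine (DCT-II) basis and are pairwise orthogonal, hence linearly independent; since the GEVP with invertible $B$ has at most $n$ linearly independent eigenvectors, \eqref{eq:set4} already exhausts the spectrum. The only delicate step is the boundary bookkeeping in the second paragraph — lining up the Hankel indices of \eqref{eq:h4} with the reflected tails of the cosine sequence at both ends — together with, for full rigor at small $n$, the check that the left and right reflections do not interfere; this last point is automatic once $n\ge 2m$, which the standing assumption that $n$ is large makes harmless. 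Everything else is the same routine trigonometric verification as in the proof of Theorem~\ref{thm:set1}.
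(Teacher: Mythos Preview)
Your proposal is correct and follows exactly the approach the paper uses for Theorem~\ref{thm:set1} and, by its own remark, intends for Theorems~\ref{thm:set2}--\ref{thm:set4} (the paper in fact omits the proof of Theorem~\ref{thm:set4} entirely, offering only a numerical example as verification). Your argument is in fact more careful than the paper's: you spell out the two reflection identities $x_{1-k}=x_k$ and $x_{2n+1-k}=x_k$ that make the Hankel correction \eqref{eq:h4} exactly absorb the fictitious Toeplitz tails, you explain why the sign is $+H$ here versus $-H$ in Theorem~\ref{thm:set2}, and you flag the possible overlap of the two Hankel corners for $n<2m$, which the paper covers only by its blanket ``$n$ large enough'' assumption.
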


We present the following example. Let $n=4, m=2, \alpha_0 = 7, \alpha_1 = 5, \alpha_2 = 2, \beta_0 = 5, \beta_1 = 3, \beta_2 = 1.$ Then, with the setting in Theorem \ref{thm:set4}, we have the following matrices
\begin{equation}
\begin{aligned}
A & = 
\begin{bmatrix}
12 & 7 & 2 & 0 \\
7 & 7 & 5 & 2 \\
2 & 5 & 7 & 7 \\
0 & 2 & 7 & 12
\end{bmatrix}, \qquad
B & = 
\begin{bmatrix}
8 & 4 & 1 & 0 \\
4 & 5 & 3 & 1 \\
1 & 3 & 5 & 4 \\
0 & 1 & 4 & 8
\end{bmatrix}.
\end{aligned}
\end{equation}
By direct calculations, the eigenpairs of \eqref{eq:GEVP} are 
\begin{equation}
\begin{aligned}
\lambda_{1,2,3,4} & = \frac{21}{13}, \ \frac{5 + 4 \sqrt{2}}{7}, \ 1,  \ \frac{5 - 4 \sqrt{2}}{7}, \\
x_1 & = (1, 1, 1, 1)^T, \\
x_2 & = (1, \sqrt{2} - 1, 1 - \sqrt{2}, -1)^T, \\
x_3 & = (1, -1, -1, 1)^T, \\
x_4 & = (-1, 1+\sqrt{2}, -1-\sqrt{2}, 1)^T, 
\end{aligned}
\end{equation}
which verifies Theorem \ref{thm:set4}.

\begin{remark}
Theorems \ref{thm:set1}--\ref{thm:set4} give analytical eigenvalues and eigenvectors for four different sets of GEVPs. The eigenvalues are of the same form while the eigenvectors are of different forms. For a fix bandwidth $m$, the internal entries of the matrices defined in Theorems \ref{thm:set1}--\ref{thm:set4} are the same while the modifications to the boundary rows are slightly different. This small discrepancy leads to different eigenvectors. It is well-known that for a complex-valued Hermitian matrix, the eigenvalues are real and eigenvectors are complex. The matrices in \eqref{eq:noh} are complex-valued. They are symmetric but not Hermitian. The eigenvalues are complex while the eigenvectors are real. 
 This property is for these special matrices and it would be interesting to generalize this property to larger classes of matrices.
\end{remark}

\subsection{Corner-overlapped block-diagonal matrices}

In this section, we consider the following type of matrix, that is, $G^{(\xi)} = (G^{(\xi)}_{j,k})$ with
\begin{equation} \label{eq:gg}
G^{(\xi)} = 
\begin{bmatrix}
\xi_3 & \xi_1     \\
\xi_1  & \xi_0  & \xi_1  & \xi_2   \\
& \xi_1  & \xi_3 & \xi_1     \\
& \xi_2  & \xi_1  & \xi_0 & \xi_1  & \xi_2   \\
& & &  \xi_1  & \xi_3 & \xi_1    \\
 & & & \xi_2 & \xi_1  & \xi_0 & \cdots    \\
 & & & & & \vdots & \ddots    \\
& & & & &   &  \ & \xi_3   \\
\end{bmatrix}.
\end{equation}
We assume that the dimension of the matrix is $(2n+1) \times (2n+1)$. 
It is a block-diagonal matrix where the corners of blocks are overlapped. Therefore, we refer to this type of matrices as the corner-overlapped block-diagonal matrices.

In this section, we derive their analytical eigenpairs.
To illustrate the idea, we consider the following matrix
\begin{equation}  \label{eq:A5}
A = 
\begin{bmatrix}
\alpha_3  & \alpha_1   \\
\alpha_1  & \alpha_0 & \alpha_1  & \alpha_2   \\
&  \alpha_1  & \alpha_3 & \alpha_1     \\
&  \alpha_2  &\alpha_1  & \alpha_0 & \alpha_1  \\
&& & \alpha_1  & \alpha_3   \\
\end{bmatrix}
\end{equation}
and its EVP \eqref{eq:evp}.
A direct symbolic calculation leads to the analytical eigenvalues
\begin{equation} \label{eq:ev12345}
\begin{aligned}
\lambda_1 & = \alpha_3, \\
\lambda_{2,3} & = \frac{1}{2} \Big( \alpha_0 + \alpha_2 + \alpha_3 \pm \sqrt{12\alpha_1^2 + (\alpha_0 + \alpha_2 - \alpha_3)^2} \Big), \\
\lambda_{4,5} & = \frac{1}{2} \Big( \alpha_0 - \alpha_2 + \alpha_3 \pm \sqrt{4\alpha_1^2 + (\alpha_0 - \alpha_2 - \alpha_3)^2} \Big).
\end{aligned}
\end{equation}

Alternatively, to find its eigenvalues, we note that the first and the last rows of \eqref{eq:evp} lead to
\begin{equation*}
\begin{aligned}
\alpha_3 x_1 + \alpha_1 x_2 & = \lambda x_1, \\
\alpha_1 x_4 + \alpha_3 x_5 & = \lambda x_5.
\end{aligned}
\end{equation*}
On one hand, we solve these equations to arrive at 
\begin{equation} \label{eq:x2x4}
\begin{aligned}
\alpha_1 x_2 & = (\lambda - \alpha_3) x_1, \\
\alpha_1 x_4 & = (\lambda - \alpha_3) x_5,
\end{aligned}
\end{equation}
which is then substituted into the third equation in \eqref{eq:evp}  to get
\begin{equation} \label{eq:x135}
(\lambda - \alpha_3) (x_5 - x_3 + x_1) = 0.
\end{equation}
On the other hand, we substitute \eqref{eq:x2x4} and the third equation of \eqref{eq:evp} into the second and the fourth equations in \eqref{eq:evp}  to get
\begin{equation} \label{eq:x24}
\begin{aligned}
\big(2 \alpha_1^2 + \alpha_0 (\lambda - \alpha_3) - \lambda (\lambda - \alpha_3) \big) x_2 + \big( \alpha_1^2 + \alpha_2(\lambda - \alpha_3) \big) x_4 & = 0, \\
\big( \alpha_1^2 + \alpha_2(\lambda - \alpha_3) \big) x_2 + \big(2 \alpha_1^2 + \alpha_0 (\lambda - \alpha_3) - \lambda (\lambda - \alpha_3) \big) x_4 & = 0.
\end{aligned}
\end{equation}

We now see that the EVP \eqref{eq:evp} is decomposed into two subproblems; that is, one EVP and QEVP as follows
\begin{equation} \label{eq:evp1}
\tilde A x= \lambda x, \qquad \text{where} \qquad \tilde A = [\alpha_3]_{1\times 1}
\end{equation}
and
\begin{equation} \label{eq:qevp2}
\lambda^2 I x - \lambda B x- Cx =0,
\end{equation}
where $I$ is the identity matrix (we assume that the dimension of $I$ is adaptive to its occurrence, in this case, it is $2\times 2$), 
\begin{equation*} 
B = 
\begin{bmatrix}
\alpha_0 + \alpha_3 & \alpha_2 \\
\alpha_2 & \alpha_0 + \alpha_3
\end{bmatrix}, 
\quad \text{and} \quad
C = 
\begin{bmatrix}
2 \alpha_1^2 - \alpha_0 \alpha_3 & \alpha_1^2 - \alpha_2 \alpha_3 \\
\alpha_1^2 - \alpha_2 \alpha_3 & 2 \alpha_1^2 - \alpha_0 \alpha_3
\end{bmatrix}.
\end{equation*}

Both matrices $B$ and $C$ are symmetric. The EVP \eqref{eq:evp1} has an analytical eigenvalue $\lambda = \alpha_3$ which is one of the eigenvalues in \eqref{eq:ev12345}. The characteristic polynomial of the QEVP \eqref{eq:qevp2} is
\begin{equation} \label{eq:cqevp2}
\chi(\lambda) = \det( \lambda^2 I + \lambda B + C),
\end{equation}
which is a polynomial of order four. From fundamental theorem of algebra, it has four roots. 
It is easy to verify that $\lambda_j, j = 2,3,4,5$ given in \eqref{eq:ev12345} are the four roots of the equation $\chi(x) = 0.$ 

Now, we propose the following idea. Assuming that $\lambda \ne 0$, we rewrite the QEVP \eqref{eq:qevp2} as 
\begin{equation} \label{eq:evp1new}
\Xi  x = \lambda x, 
\end{equation}
where
\begin{equation}  \label{eq:evp2ref}
\Xi  = B + \frac{1}{\lambda} C = 
\begin{bmatrix}
\alpha_0 + \alpha_3 + \frac{2 \alpha_1^2 - \alpha_0 \alpha_3}{\lambda}  & \alpha_2 + \frac{\alpha_1^2 - \alpha_2 \alpha_3}{\lambda} \\[0.2cm]
\alpha_2 + \frac{\alpha_1^2 - \alpha_2 \alpha_3}{\lambda}  & \alpha_0 + \alpha_3 + \frac{2 \alpha_1^2 - \alpha_0 \alpha_3}{\lambda}  
\end{bmatrix},
\end{equation}
which is a symmetric matrix. For a fixed $\lambda$ the matrix $\Xi$ can be viewed as a constant matrix. We apply Theorem \ref{thm:set1} with bandwidth $m=1$ and dimension $n=2$. 
Thus, the eigenvalues of $\Xi$ satisfy 
\begin{equation} \label{eq:qevp2from1}
\lambda_j = \Big( \alpha_0 + \alpha_3 + \frac{2 \alpha_1^2 - \alpha_0 \alpha_3}{\lambda_j} \Big) + 2 \Big( \alpha_2 + \frac{\alpha_1^2 - \alpha_2 \alpha_3}{\lambda_j} \Big) \cos(j \pi/3), \qquad j = 1,2,
\end{equation}
which is rewritten as a quadratic form in terms of $\lambda_j$
\begin{equation} 
\lambda_j^2 - \big( \alpha_0 + \alpha_3 + 2\alpha_2 \cos(j \pi/3) \big) \lambda_j -  \big( 2 \alpha_1^2 - \alpha_0 \alpha_3 + 2(\alpha_1^2 - \alpha_2 \alpha_3) \cos(j \pi/3) \big) = 0,  \ j = 1,2.
\end{equation}
For $j=1$, $\cos( j \pi/3) = 1/2$ and we obtain the eigenvalues $\lambda_{2,3}$ as in \eqref{eq:ev12345} while for $j=2$, $\cos( j \pi/3) = -1/2$ and we obtain the eigenvalues $\lambda_{4,5}$  as in \eqref{eq:ev12345}. If $\lambda=0$, then \eqref{eq:evp2ref} is invalid and we note that a shift (divide \eqref{eq:evp1new} by $\lambda - \alpha$ for some non-zero $\alpha$) will lead to the same set of eigenvalues. 

We now generalize the matrix $A$ defined \eqref{eq:A5} and consider the GEVP. 
Based on the idea above, we have the following theorem which gives analytical solutions to a class of GEVPs with  corner-overlapped block-diagonal matrices.

\begin{theorem}[Analytical eigenvalues and eigenvectors, set 5] \label{thm:femp2}
Let $A = G^{(\alpha)}$ and $B = G^{(\beta)}$. Assume that $B$ is invertible.  Then, the GEVP  \eqref{eq:GEVP} has $2n+1$ eigenvalues 
\begin{equation} 
\lambda_{2n+1} = \frac{\alpha_3}{\beta_3}, \qquad \lambda_{2j-1,2j} = \frac{-\hat b \pm \sqrt{\hat b^2 - 4 \hat a \hat c}}{2\hat a}, \qquad j =1,2,\cdots, n.
\end{equation}
where
\begin{equation}
\begin{aligned}
\hat a & = \beta_0 \beta_3 - 2 \beta_1^2 + 2 (\beta_2 \beta_3 - \beta_1^2) \cos(j\pi h), \\
\hat b & = 4 \alpha_1 \beta_1 - \beta_0 \alpha_3 - \alpha_0 \beta_3 - 2(\beta_2 \alpha_3 - 2\alpha_1 \beta_1 + \alpha_2 \beta_3) \cos(j\pi h), \\
\hat c & = \alpha_0 \alpha_3 - 2 \alpha_1^2 + 2 (\alpha_2 \alpha_3 - \alpha_1^2) \cos(j\pi h). \\
\end{aligned}
\end{equation}
The corresponding eigenvectors are $x_j = (x_{j,1}, \cdots, x_{j, 2n+1})^T$ with
\begin{equation} 
\begin{aligned}
x_{2n+1,2j+1} & = c (-1)^j, \quad x_{2n+1,2j} = 0,\quad j = 1,\cdots, n, \\
x_{j, 2k+1} & = \frac{  \alpha_1 - \lambda_j \beta_1 }{\lambda_j \beta_3 - \alpha_3} (x_{j,2k} + x_{j,2k+2}), \\
x_{j,2k} & = c\sin( \lceil \frac{j}{2} \rceil \pi k h), \quad h = \frac{1}{n+1}, \quad j = 1, \cdots, 2n, \ k =0,1,\cdots, n,
\end{aligned}
\end{equation}
where $\lceil \cdot \rceil$ is the ceiling function.
\end{theorem}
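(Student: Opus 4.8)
The plan is to carry out, for a general dimension $2n+1$, the elimination that is done by hand for the $5\times 5$ matrix in \eqref{eq:A5}. The structural fact that drives everything is that an odd-numbered row of $G^{(\xi)}$ contains only the entries $\xi_1,\xi_3,\xi_1$ and therefore couples an odd-indexed unknown $x_{2k+1}$ to just its two even-indexed neighbours, whereas an even-numbered row carries the full stencil $\xi_2,\xi_1,\xi_0,\xi_1,\xi_2$. Adopting the convention $x_0 = x_{2n+2} = 0$, every odd row of \eqref{eq:GEVP}, indexed by $2k+1$ with $k = 0,1,\dots,n$, is just
\begin{equation*}
(\alpha_3 - \lambda\beta_3)\,x_{2k+1} = -(\alpha_1 - \lambda\beta_1)\,(x_{2k}+x_{2k+2}),
\end{equation*}
which is the general-dimension version of \eqref{eq:x2x4}.

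I would first dispose of the value $\lambda = \alpha_3/\beta_3$. For this $\lambda$ the left-hand side above vanishes, so $x_{2k}+x_{2k+2}=0$ for every $k$; chained with $x_0=0$ this forces all even-indexed components to vanish, and feeding this back into the even rows leaves $(\alpha_1-\lambda\beta_1)(x_{2k-1}+x_{2k+1})=0$, hence $x_{2k+1} = (-1)^k x_1$. A direct substitution then confirms that the vector with $x_{2k+1} = c(-1)^k$ and $x_{2k}=0$ solves \eqref{eq:GEVP} with $\lambda_{2n+1} = \alpha_3/\beta_3$, which is the last eigenpair in the statement.

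For $\lambda \neq \alpha_3/\beta_3$ I would multiply each even-row equation through by $(\alpha_3-\lambda\beta_3)$ and use the displayed relation to eliminate $x_{2k-1}$ and $x_{2k+1}$ in favour of the even-indexed unknowns $y_k := x_{2k}$, $k=1,\dots,n$ (with $y_0 = y_{n+1}=0$); this realises the two-block decomposition \eqref{eq:evp1}--\eqref{eq:qevp2} at general dimension. Collecting terms collapses the even rows to the homogeneous tridiagonal Toeplitz system
\begin{equation*}
P(\lambda)\,(y_{k-1}+y_{k+1}) + Q(\lambda)\,y_k = 0, \qquad k = 1,\dots,n,
\end{equation*}
with $P(\lambda) = (\alpha_3-\lambda\beta_3)(\alpha_2-\lambda\beta_2) - (\alpha_1-\lambda\beta_1)^2$ and $Q(\lambda) = (\alpha_3-\lambda\beta_3)(\alpha_0-\lambda\beta_0) - 2(\alpha_1-\lambda\beta_1)^2$, both quadratic in $\lambda$. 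Theorem \ref{thm:set1} with bandwidth $m=1$ and dimension $n$ (the classical sine-vector spectrum of a tridiagonal Toeplitz matrix) says this system has a nonzero solution if and only if $Q(\lambda) + 2P(\lambda)\cos(j\pi h) = 0$ for some $j\in\{1,\dots,n\}$, $h = 1/(n+1)$, and then $y_k = c\sin(j\pi k h)$. Expanding $P$ and $Q$ and matching the coefficients of $\lambda^2,\lambda,1$ rewrites $Q(\lambda)+2P(\lambda)\cos(j\pi h)=0$ as $\hat a\lambda^2 + \hat b\lambda + \hat c = 0$ with exactly the $\hat a,\hat b,\hat c$ listed, whose two roots are $\lambda_{2j-1,2j}$; the odd components are recovered from the displayed relation as $x_{2k+1} = \frac{\alpha_1-\lambda\beta_1}{\lambda\beta_3-\alpha_3}(x_{2k}+x_{2k+2})$, and since the $n$ sine modes are consumed two eigenvalues at a time the mode index attached to the eigenpair numbered $j$ is $\lceil j/2\rceil$, which produces the ceiling function in the eigenvector formula. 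Finally, since $B$ is invertible, $\det(A-\lambda B)$ has degree exactly $2n+1$, so the $2n$ roots of the $n$ quadratics together with $\alpha_3/\beta_3$ exhaust the eigenvalues.

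The elimination and the coefficient matching are mechanical; the friction I expect is organizational — keeping the end conventions $x_0 = x_{2n+2}=0$ straight, pairing the $n$ sine modes with the $2n$ non-exceptional eigenvalues, and handling genuinely degenerate parameter choices (when $\alpha_1-\lambda\beta_1$ or $P(\lambda)$ also vanishes at $\lambda=\alpha_3/\beta_3$, when a quadratic shares a root with $\alpha_3/\beta_3$, or the edge case $n=1$). Those last are best dispatched by a continuity/perturbation remark, exactly as the $\lambda=0$ caveat is handled right after \eqref{eq:qevp2from1}, rather than by an explosion of cases.
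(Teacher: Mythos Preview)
Your proposal is correct and follows essentially the same route as the paper: use the odd rows to express each $x_{2k+1}$ in terms of $x_{2k}+x_{2k+2}$, substitute into the even rows to obtain a tridiagonal Toeplitz system in the even-indexed unknowns, and then invoke Theorem~\ref{thm:set1} with $m=1$ to extract the $n$ sine modes and the associated quadratics $\hat a\lambda^2+\hat b\lambda+\hat c=0$. The only organizational difference is that the paper packages the elimination as a lower-triangular block decomposition (your \eqref{eq:evp1}--\eqref{eq:qevp2} at general dimension becomes the paper's $2\times 2$ block system with $\chi(\lambda)=\det(A_{ee})\det(A_{oo})$), reading off $\lambda_{2n+1}=\alpha_3/\beta_3$ from the $A_{oo}$ factor rather than treating it as a separate case up front; the substance is identical.
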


\begin{proof}
For simplicity, we denote $(\lambda, x = (x_1, \cdots, x_{2n+1})^T)$ as a generic eigenpair of the GEVP \eqref{eq:GEVP}. We assume that $x_0 = x_{2n+2} = 0.$ One one hand, the $(2j+1)$-th row of \eqref{eq:GEVP} leads to
\begin{equation}
\alpha_1 x_{2j} + \alpha_3 x_{2j+1}  + \alpha_1 x_{2j+2} = \lambda (\beta_1 x_{2j} + \beta_3 x_{2j+1}  + \beta_1 x_{2j+2}), \qquad j = 0,1,\cdots, n,
\end{equation}
which is simplified to
\begin{equation} \label{eq:thm51}
( \alpha_1 - \lambda \beta_1) ( x_{2j} + x_{2j+2})  = (\lambda \beta_3 - \alpha_3 ) x_{2j+1}, \qquad j = 0,1,\cdots, n.
\end{equation}
Using \eqref{eq:thm51} recursively and $x_0 = x_{2n+2} = 0$, we calculate
\begin{equation} \label{eq:thm5a}
\begin{aligned}
0 & = ( \alpha_1 - \lambda \beta_1) x_{2n+2} \\
& = (\lambda \beta_3 - \alpha_3 ) x_{2n+1} - ( \alpha_1 - \lambda \beta_1) x_{2n} \\
& = \cdots \\
& = (\lambda \beta_3 - \alpha_3 ) \Big( \sum_{j=0}^n (-1)^{n-j} x_{2j+1} \Big).
\end{aligned}
\end{equation}

One the other hand, the $(2j+2)$-th row of \eqref{eq:GEVP} leads to
\begin{equation}
\begin{aligned}
\alpha_2 x_{2j} + \alpha_1 x_{2j+1}  & + \alpha_0 x_{2j+2}  + \alpha_1 x_{2j+3}  + \alpha_2 x_{2j+4} \\
& = \lambda (\beta_2 x_{2j} + \beta_1 x_{2j+1}  + \beta_0 x_{2j+2}  + \beta_1 x_{2j+3}  + \beta_2 x_{2j+4} ),
\end{aligned}
\end{equation}
where $j = 0,1,\cdots, n-1.$ Using \eqref{eq:thm51} and $x_0 = x_{2n+2} = 0$, this equation simplifies to
\begin{equation} \label{eq:thm5b}
\tilde \alpha_1 x_{2j} + \tilde \alpha_0 x_{2j+2}  + \tilde \alpha_1 x_{2j+4} = \lambda ( \tilde \beta_1 x_{2j} + \tilde \beta_0 x_{2j+2}  + \tilde \beta_1 x_{2j+4}), \quad j = 0,1,\cdots, n-1,
\end{equation}
where
\begin{equation} \label{eq:thm5p}
\begin{aligned}
\tilde \alpha_0 & = \alpha_0 + \frac{2 \alpha_1 ( \alpha_1 - \lambda \beta_1) }{\lambda \beta_3 - \alpha_3}, && \tilde \beta_0 = \beta_0 + \frac{2 \beta_1 ( \alpha_1 - \lambda \beta_1) }{\lambda \beta_3 - \alpha_3}, \\
\tilde \alpha_1 & = \alpha_2  + \frac{\alpha_1 ( \alpha_1 - \lambda \beta_1)}{\lambda \beta_3 - \alpha_3}, && \tilde \beta_1 = \beta_2 + \frac{\beta_1 ( \alpha_1 - \lambda \beta_1) }{\lambda \beta_3 - \alpha_3}.
\end{aligned}
\end{equation}

The GEVP \eqref{eq:GEVP} with the matrices defined in this theorem is then decomposed to a block form
\begin{equation} \label{eq:thm5d}
\begin{bmatrix}
A_{ee} & \mathbf{0} \\
A_{oe} & A_{oo}
\end{bmatrix}
\begin{bmatrix}
x_e \\
x_o 
\end{bmatrix}
=
\begin{bmatrix}
\mathbf{0} \\
\mathbf{0}
\end{bmatrix},
\end{equation}
where $x_e = (x_2, \cdots x_{2n} )^T$, $x_o = (x_1, \cdots x_{2n+1} )^T$, the first block $A_{ee}$ are formed from \eqref{eq:thm5b}, and the blocks $A_{oe}$ and $A_{oo}$ are formed from \eqref{eq:thm51}. 
The characteristic polynomial of \eqref{eq:thm5d} is
\begin{equation} \label{eq:thm5c}
\chi(\lambda) = \det( A_{ee}) \det (A_{oo}).
\end{equation}
The roots of $\chi(\lambda) = 0$ give the eigenvalues. Firstly,  using \eqref{eq:thm5a}, $\det (A_{oo})= 0$  leads to the eigenpair which we denote it as $(\lambda_{2n+1}, x_{2n+1})$ with the eigenvector $x_{2n+1} = (x_{2n+1,1}, \cdots,  x_{2n+1,2n+1})^T$ and
\begin{equation}
\lambda_{2n+1} = \frac{\alpha_3}{\beta_3}, \qquad x_{2n+1,2j+1} = c(-1)^j, \quad x_{2n+1,2j} = 0,\quad j = 1,\cdots, n.
\end{equation}
This eigenvalue $\lambda_{2n+1}$ is simple due to the nonzero block matrix $A_{oe}$.
Now, the first block part of \eqref{eq:thm5d} that leads to $A_{ee} x_e = 0$ can be written as a GEVP with tridiagonal matrices defined using parameters in \eqref{eq:thm5p}. Applying Theorem \ref{thm:set1} with $m=1$, we obtain that the eigenpairs $(\lambda_j, x_{j})$ with $x_j = (x_{j,2}, \cdots, x_{j,2n})^T$ and
\begin{equation} \label{eq:x2k}
\lambda_j = \frac{ \tilde \alpha_0 + 2 \tilde \alpha_1 \cos(j\pi h) }{\tilde \beta_0 + 2 \tilde \beta_1 \cos(j\pi h)  }, \quad x_{j,2k} = c\sin( j \pi k h), \quad h = \frac{1}{n+1}, \ j, k =1,2,\cdots, n.
\end{equation}

Using \eqref{eq:thm5p} and rearranging the index of the eigenpairs accordingly (due to the quadratic feature in the eigenvalue, one eigenpair $(\lambda_j, x_{j})$ becomes two eigenpairs $(\lambda_{2j-1}, x_{2j-1})$ and $ (\lambda_{2j}, x_{2j})$), we have 
\begin{equation*} 
\lambda_{2j-1,2j} = \frac{-\hat b \pm \sqrt{\hat b^2 - 4 \hat a \hat c}}{2\hat a}, \qquad j =1,2,\cdots, n,
\end{equation*}
where
\begin{equation*}
\begin{aligned}
\hat a & = \beta_0 \beta_3 - 2 \beta_1^2 + 2 (\beta_2 \beta_3 - \beta_1^2) \cos(j\pi h), \\
\hat b & = 4 \alpha_1 \beta_1 - \beta_0 \alpha_3 - \alpha_0 \beta_3 - 2(\beta_2 \alpha_3 - 2\alpha_1 \beta_1 + \alpha_2 \beta_3) \cos(j\pi h), \\
\hat c & = \alpha_0 \alpha_3 - 2 \alpha_1^2 + 2 (\alpha_2 \alpha_3 - \alpha_1^2) \cos(j\pi h). \\
\end{aligned}
\end{equation*}

Associated with the eigenvalue $\lambda_{2j-1}$ and $\lambda_{2j}$, the eigenvectors are then denoted as $x_{2j-1}$ and $x_{2j}$, respectively. From \eqref{eq:x2k}, we have
\begin{equation*}
x_{2j-1, 2k} =  x_{2j, 2k} = c\sin( j \pi k h), \quad j, k =1,2,\cdots, n.
\end{equation*}
The odd entries of the eigenvectors are given by \eqref{eq:thm51} as
\begin{equation*}
\begin{aligned}
x_{2j-1, 2k+1} & = \frac{  \alpha_1 - \lambda_{2j-1} \beta_1 }{\lambda_{2j-1} \beta_3 - \alpha_3} (x_{2k} + x_{2k+2}), \\
x_{2j, 2k+1} & = \frac{  \alpha_1 - \lambda_{2j} \beta_1 }{\lambda_{2j} \beta_3 - \alpha_3} (x_{2k} + x_{2k+2}), \qquad k = 0,1,\cdots, n, \quad j=1,2,\cdots,n.
\end{aligned}
\end{equation*}
This completes the proof.
\end{proof}

 We give an example which arises from the FEM discretization. 
The quadratic FEM of the  Laplacian eigenvalue problem with $n$ uniform elements on the unit interval $[0,1]$ leads to the stiffness and mass matrices \cite{hughes2012finite,strang1973analysis}
\begin{equation} \label{eq:km1dp2}
K = 
\frac{1}{h}
\begin{bmatrix}
\frac{16}{3} & -\frac{8}{3} &  \\[0.2cm]
-\frac{8}{3} & \frac{14}{3} & -\frac{8}{3} & \frac{1}{3} \\[0.2cm]
& -\frac{8}{3} & \frac{16}{3} & -\frac{8}{3} &  \\[0.2cm]
&   & \ddots & \ddots & \ddots   \\
&& & -\frac{8}{3} & \frac{16}{3}  \\
\end{bmatrix}, \qquad
M = h
\begin{bmatrix}
\frac{8}{15} & \frac{1}{15} &  \\[0.2cm]
\frac{1}{15} & \frac{4}{15} & \frac{1}{15} & -\frac{1}{30} \\[0.2cm]
& \frac{1}{15} & \frac{8}{15} & \frac{1}{15}&  \\[0.2cm]
&   & \ddots & \ddots & \ddots   \\
&& & \frac{1}{15} & \frac{8}{15}  \\
\end{bmatrix},
\end{equation}
which are of dimension ${(2n-1) \times (2n-1)}$. With these matrices in mind, we have the following analytical result.

Let $K$ and $M$ be defined in \eqref{eq:km1dp2}. Then, the GEVP  $K u = \lambda M u$ has analytical eigenvalues
\begin{equation*}
\lambda_j = 
\begin{cases}
\frac{4 \big( 13 + 2\cos(j \pi h) - \sqrt{124 + 112\cos(j \pi h) -11 \cos^2(j \pi h)} \big) }{3 - \cos(j \pi h) }n^2, \quad & j = 1,\cdots, n-1, \\
10 n^2, \quad & j =n, \\
\frac{4 \big( 13 + 2\cos(j \pi h) + \sqrt{124 + 112\cos(j \pi h) -11 \cos^2(j \pi h)} \big) }{3 - \cos(j \pi h) }n^2, \quad & j = n+1, \cdots, 2n-1, \\
\end{cases}
\end{equation*}
and
analytical eigenvectors  $ u_j = ( u_{j,1}, $ $ \cdots,   u_{j, 2n-1})^T$ where 
\begin{equation*} 
\begin{aligned}
 u_{j, 2k+1} & = \frac{ 40 + \lambda_j^h h^2 }{80 - 8 \lambda_j^h h^2} ( u_{j,2k} +  u_{j,2k+2}),  j \ne n, j = 1, \cdots, 2n-1, \ k =0,1,\cdots, n-1, \\
 u_{j,2k} & = 
\begin{cases}
c\sin( j \pi k h), \quad j = 1, \cdots, n-1, \ k =0,1,\cdots, n, \\
c\sin( (j-n) \pi k h), \quad j = n+1, \cdots, 2n-1, \ k =0,1,\cdots, n, \\
\end{cases} \\
 u_{n,2j+1} & = c (-1)^j, j = 0, 1,\cdots, n-1, \quad  u_{n,2j} = 0,\quad j = 1,\cdots, n-1. \\
\end{aligned}
\end{equation*}

Generalization of Theorem \ref{thm:femp2} is possible. The matrix defined in \eqref{eq:gg} is a corner-overlapped block-diagonal matrix where each block (except the first and last) is of dimension $3\times3$. One can generalize the block to be of dimension $k\times k$ where $k\ge3$. We give the following example where the block is $4\times 4$.
The cubic FEM  of the  Laplacian eigenvalue problem with $n$ uniform elements on the unit interval $[0,1]$ leads to the stiffness and mass matrices \cite{hughes2012finite,strang1973analysis}
\begin{equation} \label{eq:km1dp3}
\begin{aligned}
K & = 
\frac{1}{h}
\begin{bmatrix}
\frac{54}{5} & -\frac{297}{40} & \frac{27}{20}  \\[0.2cm]
-\frac{297}{40}  & \frac{54}{5} & -\frac{189}{40} \\[0.2cm]
\frac{27}{20} & -\frac{189}{40} & \frac{37}{5} & -\frac{189}{40} & \frac{27}{20} & -\frac{13}{40}  \\[0.2cm]
&& -\frac{189}{40} & \frac{54}{5} & -\frac{297}{40} & \frac{27}{20}  \\[0.2cm]
&& \frac{27}{20} &  -\frac{297}{40}  & \frac{54}{5} & -\frac{189}{40} \\[0.2cm]
&   & & & & \ddots & \ddots & \ddots  \\[0.2cm]
&   & & & &  \frac{27}{20} & -\frac{297}{40} &  \frac{54}{5}  \\
\end{bmatrix}, \\ 
M & = h
\begin{bmatrix}
\frac{27}{70} & -\frac{27}{560} & -\frac{3}{140}  \\[0.2cm]
-\frac{27}{560}  & \frac{27}{70}  & \frac{33}{560} \\[0.2cm]
-\frac{3}{140}  & \frac{33}{560}  & \frac{16}{105} & \frac{33}{560}  & -\frac{3}{140} & \frac{19}{1680}  \\[0.2cm]
&& \frac{33}{560} & \frac{27}{70}  & -\frac{27}{560} & -\frac{3}{140}  \\[0.2cm]
&& -\frac{3}{140}  &  -\frac{27}{560}  & \frac{27}{70}  & \frac{33}{560} \\[0.2cm]
&   & & & & \ddots & \ddots & \ddots  \\[0.2cm]
&   & & & & -\frac{3}{140}  & -\frac{27}{560}&  \frac{27}{70}   \\
\end{bmatrix}, \\ 
\end{aligned}
\end{equation}
which are of dimension ${(3n-1) \times (3n-1)}$. To find the analytical eigenvalues, we follow the same procedure as for quadratic FEM. By static condensation, the matrix eigenvalue problem GEVP $K  u = \lambda M  u$ is first rewritten to a block-matrix form
\begin{equation} \label{eq:p3eigdecomp}
\begin{bmatrix}
A_{tt} & \mathbf{0} \\
A_{ot} & A_{oo}
\end{bmatrix}
\begin{bmatrix}
 u^t \\
 u^o 
\end{bmatrix}
=
\begin{bmatrix}
\mathbf{0} \\
\mathbf{0}
\end{bmatrix},
\end{equation}
where $u^t = ( u_3,  u_6, \cdots,  u_{3n-3} )^T$, $ u^o = ( u_1,  u_2,  u_4,  u_5,  \cdots,  u_{3n-2},  u_{3n-1} )^T$. 
Similarly, $\det (A_{oo})=0$ leads to the eigenvalue $\lambda_n = 10n^2, \lambda_{2n} = 42n^2$. To find the other eigenvalues, we rewrite $A_{tt}  u^t = \mathbf{0}$ as a cubic EVP
\begin{equation} \label{eq:cevp}
\begin{aligned}
0 & = 
(\Lambda)^3 
\begin{bmatrix}
8 & 1 &  \\
1 & 8 & 1 &  \\
   & \ddots & \ddots & \ddots &  \\
  &  & 1 & 8 & 1  \\
&& & 1 & 8  \\
\end{bmatrix}
+ 30 (\Lambda)^2
\begin{bmatrix}
-36 & 1 &  \\
1 & -36 & 1 &  \\
   & \ddots & \ddots & \ddots &  \\
  &  & 1 & -36 & 1  \\
&& & 1 & -36  \\
\end{bmatrix} \\
&\quad  + 360(\Lambda)
\begin{bmatrix}
64 & 3 &  \\
3 & 64 & 3 &  \\
   & \ddots & \ddots & \ddots &  \\
  &  & 3 & 64 & 3  \\
&& & 3 & 64  \\
\end{bmatrix}
- 25200
\begin{bmatrix}
2 & -1 &  \\
-1 & 2 & -1 &  \\
   & \ddots & \ddots & \ddots &  \\
  &  & -1 & 2 & -1  \\
&& & -1 & 2  \\
\end{bmatrix},
\end{aligned}
\end{equation}
which is of dimension ${(n-1) \times (n-1)}$. Herein, $\Lambda=\lambda h^2$. Using the derivations from the proof of Theorem \ref{thm:femp2}, we obtain the eigenvalues that are the roots of the equation
\begin{equation}
( 4 + \zeta ) \Lambda^3 - 30( 18 - \zeta ) \Lambda^2 + 360 (32 + 3 \zeta ) \Lambda - 25200 (1 - \zeta ) = 0,
\end{equation}
where $\zeta = \cos(j\pi h), j=1,\cdots,n-1$. The eigenvectors can be obtained similarly.

\subsection{Polynomial eigenvalue problems (PEVPs)}
The PEVP is to find the eigenpairs $ (\lambda \in \mathbb{C}, x \in \mathbb{C}^n) $ such that 
\begin{equation} \label{eq:pep}
P(\lambda) x = 0 \qquad \text{with} \qquad P(\lambda) := \lambda^q A_q + \lambda^{q-1} A_{q-1} + \cdots + A_0,
\end{equation}
where $A_j \in \mathbb{C}^{n \times n}, j = 0,1,\cdots, q$.
When $q=1$ and $q=2$, the PEVP reduces to linear eigenvalue problem (LEVP) and QEVP, respectively. The LEVP is also referred to as the GEVP as discussed above.

We note that the nonlinear eigenvalue problem $A_{ee} x_e = 0$ defined in the proof of Theorem \ref{thm:femp2} is a QEVP. In particular, the problem can be written as follows
\begin{equation} \label{eq:qevp}
\lambda^2 B x+ \lambda C x + Dx =0,
\end{equation}
where $B =  T^{(\gamma,1)}$ with $\gamma_0 = \beta_0 \beta_3 - 2 \beta_1^2, \gamma_1 = \beta_2 \beta_3 - \beta_1^2$,  $C =  T^{(\gamma,1)}$ with $\gamma_0 = 4\alpha_1 \beta_1 - \beta_0 \alpha_3 - \alpha_0 \beta_3, \gamma_1 = 2\alpha_1 \beta_1 - \beta_2 \alpha_3 - \alpha_2 \beta_3$,  and $D =  T^{(\gamma,1)}$ with $\gamma_0 = \alpha_0 \alpha_3 - 2 \alpha_1^2, \gamma_1 = \alpha_2 \alpha_3 - \alpha_1^2$.
Herein, $ T^{(\gamma,1)}$ is defined as \eqref{eq:T}.
The analytical eigenpairs are then derived based on the Theorem \ref{thm:set1} with $m=1$. The analytical eigenvalues of the cubic EVP \eqref{eq:cevp} are derived in a similar fashion.  We generalize these results and give the following  result.

\begin{theorem}[Analytical solutions to PEVP]
 Let $n\ge2, 1\le m \le n-1$ and 
$
A_j = T^{(\alpha^{(j)},m)} - H^{(\alpha^{(j)},m)}
$ 
with $T^{(\xi,m)}$ and $H^{(\xi,m)}$, $\xi = \alpha^{(j)},j=0,1,\cdots,q,$  defined in \eqref{eq:T} and \eqref{eq:h1}, respectively.
Then, the PEVP \eqref{eq:pep} has eigenpairs $(\lambda_j, x_j)$ that $x_j$ are given in \eqref{eq:set1} and $\lambda_j$ are the roots of the polynomials
\begin{equation} 
\sum_{k=0}^q \lambda_j^k \Big(  \alpha_0^{(k)} +  2 \sum_{l=1}^{m}  \alpha_l^{(k)} \cos(l j\pi h)  \Big) =  0, \qquad j =1,2,\cdots, n.
\end{equation}
\end{theorem}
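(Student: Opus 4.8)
The plan is to reuse, essentially verbatim, the computation already carried out for Theorem~\ref{thm:set1}, but now tracking the dependence on the polynomial parameter $\lambda$ rather than cancelling it into a single ratio. First I would fix an index $j \in \{1,\dots,n\}$ and the candidate vector $x_j$ with entries $x_{j,k} = c\sin(j\pi k h)$, $h = 1/(n+1)$, and compute, for each matrix $A_k = T^{(\alpha^{(k)},m)} - H^{(\alpha^{(k)},m)}$ separately, the $i$-th entry of $A_k x_j$. This is precisely the calculation in the proof of Theorem~\ref{thm:set1}: the Toeplitz part contributes $\alpha_0^{(k)}\sin(j\pi i h) + \sum_{l=1}^m \alpha_l^{(k)}\bigl(\sin(j\pi(i+l)h) + \sin(j\pi(i-l)h)\bigr)$, and the boundary rows, where the sum would run past $1$ or $n$, are exactly corrected by the Hankel part $-H^{(\alpha^{(k)},m)}$ together with the fact that $\sin$ vanishes at the phantom indices $0$ and $n+1$ and reflects antisymmetrically. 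Using $\sin(\phi\pm\psi)=\sin\phi\cos\psi\pm\cos\phi\sin\psi$, the $\cos\phi\,\sin\psi$ terms cancel in pairs and one is left with $(A_k x_j)_i = \bigl(\alpha_0^{(k)} + 2\sum_{l=1}^m \alpha_l^{(k)}\cos(lj\pi h)\bigr) x_{j,i}$, independent of the row index $i$.

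Next I would simply sum these identities weighted by $\lambda_j^k$: since $P(\lambda_j) = \sum_{k=0}^q \lambda_j^k A_k$, linearity gives
\begin{equation*}
P(\lambda_j) x_j = \Bigl( \sum_{k=0}^q \lambda_j^k \bigl( \alpha_0^{(k)} + 2\sum_{l=1}^m \alpha_l^{(k)} \cos(lj\pi h) \bigr) \Bigr) x_j.
\end{equation*}
Hence $x_j$ is an eigenvector of the PEVP with eigenvalue $\lambda_j$ exactly when the scalar coefficient vanishes, i.e. when $\lambda_j$ is a root of the stated degree-$q$ polynomial $\sum_{k=0}^q \lambda^k\bigl(\alpha_0^{(k)} + 2\sum_{l=1}^m \alpha_l^{(k)}\cos(lj\pi h)\bigr)=0$. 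By the fundamental theorem of algebra this polynomial has $q$ roots (counting multiplicity, assuming its leading coefficient $\alpha_0^{(q)}+2\sum_l\alpha_l^{(q)}\cos(lj\pi h)$ is nonzero), and each pairs with the same $x_j$; letting $j$ range over $1,\dots,n$ then produces the claimed families of eigenpairs. I would also remark that the $x_j$ are linearly independent (they are the discrete sine basis), which is why these are genuinely distinct eigenvector directions.

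The only real subtlety — and the step I expect to need the most care — is the boundary-row bookkeeping, i.e. verifying that the Hankel correction $-H^{(\alpha^{(k)},m)}$ does the right thing simultaneously for every $k$ and for the larger bandwidth $m$. But since $A_k$ here has exactly the same Toeplitz-plus-Hankel structure as the matrix $A$ in Theorem~\ref{thm:set1}, this is literally the content of that theorem's proof applied $q+1$ times; no new phenomenon arises. Consequently I would not reprove it in detail but instead write: ``applying the computation in the proof of Theorem~\ref{thm:set1} to each $A_k$ and summing against $\lambda_j^k$ yields $P(\lambda_j)x_j = \bigl(\sum_{k=0}^q \lambda_j^k(\alpha_0^{(k)}+2\sum_{l=1}^m\alpha_l^{(k)}\cos(lj\pi h))\bigr)x_j$, which is zero precisely when $\lambda_j$ satisfies the displayed polynomial equation; this completes the proof.'' One could equally note that the analogous statement holds with $H^{(\cdot,m)}$ replaced by the variants in \eqref{eq:h2}, \eqref{eq:h3}, \eqref{eq:h4} and the corresponding eigenvector forms from Theorems~\ref{thm:set2}--\ref{thm:set4}, by the same argument.
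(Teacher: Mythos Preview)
Your proposal is correct and follows essentially the same approach as the paper: seek eigenvectors of the form $x_{j,k}=c\sin(j\pi kh)$, use the Theorem~\ref{thm:set1} computation to reduce each row of $P(\lambda)x_j=0$ to the single scalar equation $\sum_{k=0}^q \lambda^k\bigl(\alpha_0^{(k)}+2\sum_{l=1}^m\alpha_l^{(k)}\cos(lj\pi h)\bigr)=0$, and conclude. The paper's proof is in fact terser than yours (it does not spell out the Hankel boundary bookkeeping or the linear independence of the $x_j$), and your closing remark about the variants \eqref{eq:h2}--\eqref{eq:h4} matches the paper's own remark following the proof.
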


\begin{proof}
 
Following \cite[p. 515]{meyer2000matrix} and the generalization in Theorem \ref{thm:set1}, 
we seek eigenvectors with entries of the form $c\sin( j \pi k h)$.
Then each row of the PEVP \eqref{eq:pep}, 
\begin{equation*}
\sum_{s=0}^q \sum_{k=1}^n \lambda^s A_{s, ik} x_{j,k} = \sum_{s=0}^q \lambda^s  \sum_{k=1}^n A_{s, ik} x_{j,k}  = 0, \quad i=1,\cdots, n,
\end{equation*}
which boils down to
\begin{equation} \label{eq:ep}
\sum_{s=0}^q \lambda^s \Big(  \alpha_0^{(s)} +  2 \sum_{l=1}^{m}  \alpha_l^{(s)} \cos(l j\pi h)  \Big) = 0, \quad i=1,\cdots, n.
\end{equation}
We note that the above form is independent of the row number $i$ and the eigenvector index number $j$. 
This means that a root of \eqref{eq:ep} and a vector $x_j = (x_{j,1}, \cdots, x_{j,n})^T$ with $x_{j,k} = c\sin( j \pi k h), k=1,\cdots,n,$ always satisfy the PEVP. Hence, the root and $x_j$ give an eigenpair. 
This completes the proof.
\end{proof}

We remark that similar analytical results can be obtained if the Hankel matrix $H$ is defined  as \eqref{eq:h2}, \eqref{eq:h3}, and \eqref{eq:h4}. The eigenvectors are of the same form in each Theorem \ref{thm:set2}--\ref{thm:set4} and there are $n$ eigenvectors. There are $nq$ eigenvalues and the eigenvalues are roots of the $n$ $q$-th order polynomials. Two examples are given in the previous subsection, so we omit presenting more examples for brevity.

\subsection{Generalizations} \label{sec:gen}
We now consider some other generalizations. 
The simplest case is the constant scaling.
Let $c_1 \ne 0 , c_2 \ne 0$ be constants, then the GEVP $c_1 Ax = \lambda c_2 B x$ has eigenvalues $\lambda = \tilde \lambda c_2/c_1$ where $\tilde \lambda$ is an eigenvalue of the GEVP $ A x= \tilde \lambda B x$. The eigenvectors remain the same. This constant scaling has applications in various numerical spectral approximations. For example, for FEM, the scaling is in the form $(1/h) Ax = \lambda h B x$ while for FDM, the scaling is $c_1 = 1/h^2, c_2=1$ where $h$ is the size of a mesh (cf., \cite{strang1973analysis,strang1988linear}). 

Following the book of Strang \cite[Section 5]{strang1988linear}, one may generalize these results to powers and products of matrices. For EVP of the form \eqref{eq:evp}, the powers $A^k, k \in \mathbb{Z}$ has eigenvalues $\lambda^k_j$, where $\lambda_j, j= 1,\cdots, n$ are eigenvalues of \eqref{eq:evp}. The eigenvectors remain the same. For the EVPs $A x=\lambda x$ and $Bx = \mu x$, if $A$ commutes with $B$, that is, $AB=BA$, then the two EVPs have the same eigenvectors and the eigenvalues of $AB$ (or $BA$) are $\lambda \mu$. Additionally, in this case, $A+B$ has eigenvalues $\lambda + \mu$. 
 For GEVP of the form \eqref{eq:GEVP}, similar results can be obtained. If the matrices entries are commutative, then
$
 B^{-1}A = A B^{-1}.
$
The GEVP
$
 A^k x = \mu B^k x
$
 has eigenvalues $\mu = \lambda^k$ where $\lambda$ is an eigenvalue of $ A x= \lambda B x$. Similar results can be obtained for products and additions. 

We now consider the tensor-product matrices from the FEM-based discretizations (see, for example, \cite{calo2019dispersion,deng2018dmm}) of the Laplacian eigenvalue problem in multi-dimension. 
 We have the following result.

\begin{theorem}[Eigenvalues and eigenvectors for tensor-product matrices] \label{thm:tp2}
Let $(\lambda_j, x_j), j=1,\cdots, n,$ be the eigenpairs of the GEVP \eqref{eq:GEVP} and $(\mu_k, y_k), k=1,\cdots, m,$ be the eigenpairs of the GEVP $C  y = \mu D  y$. Then, the GEVP
\begin{equation} \label{eq:tp2}
(A \otimes D + B \otimes C)  z = \eta (B \otimes D) z
\end{equation}
has eigenpairs $(\eta_{(j,k)}, z_{(j,k)})$ with 
\begin{equation}
\eta_{(j,k)} = \lambda_j + \mu_k, \qquad  z_{(j,k)} = x_j \otimes y_k, \qquad j=1,\cdots, n, \  k=1,\cdots, m.
\end{equation}
\end{theorem}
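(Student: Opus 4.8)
The plan is to verify the claimed eigenpairs directly by substitution, exploiting the mixed-product property of the Kronecker product. First I would recall the two ingredients: $(\lambda_j,x_j)$ solves $Ax_j=\lambda_j Bx_j$ and $(\mu_k,y_k)$ solves $Cy_k=\mu_k Dy_k$. The key algebraic identity is $(M\otimes N)(u\otimes v) = (Mu)\otimes(Nv)$, valid whenever the dimensions match, which here they do since $A,B\in\mathbb{C}^{n\times n}$ and $C,D\in\mathbb{C}^{m\times m}$ while $z_{(j,k)}=x_j\otimes y_k\in\mathbb{C}^{nm}$.

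Next I would compute the left-hand side of \eqref{eq:tp2} applied to $z_{(j,k)}$. Using the mixed-product property termwise,
\begin{equation*}
(A\otimes D)(x_j\otimes y_k) = (Ax_j)\otimes(Dy_k) = (\lambda_j Bx_j)\otimes(Dy_k) = \lambda_j\,(Bx_j)\otimes(Dy_k),
\end{equation*}
and similarly
\begin{equation*}
(B\otimes C)(x_j\otimes y_k) = (Bx_j)\otimes(Cy_k) = (Bx_j)\otimes(\mu_k Dy_k) = \mu_k\,(Bx_j)\otimes(Dy_k).
\end{equation*}
Adding these gives $(A\otimes D + B\otimes C)z_{(j,k)} = (\lambda_j+\mu_k)\,(Bx_j)\otimes(Dy_k)$. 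On the other hand, $(B\otimes D)z_{(j,k)} = (Bx_j)\otimes(Dy_k)$, again by the mixed-product property. Setting $\eta_{(j,k)}=\lambda_j+\mu_k$ then yields exactly $(A\otimes D+B\otimes C)z_{(j,k)} = \eta_{(j,k)}(B\otimes D)z_{(j,k)}$, which is \eqref{eq:tp2}. Finally I would note that this produces $nm$ eigenpairs; since $B\otimes D$ is invertible (as $B$ and $D$ are, by the standing assumptions), the GEVP has exactly $nm$ eigenvalues counted with multiplicity, so these are all of them, and linear independence of the $z_{(j,k)}$ follows from linear independence of the $x_j$ and of the $y_k$ together with the standard fact that tensor products of independent sets are independent.

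There is essentially no obstacle here: the statement is a routine consequence of the Kronecker mixed-product identity, and the only thing to be a little careful about is bookkeeping the order of factors so that the common vector $(Bx_j)\otimes(Dy_k)$ factors out cleanly from both terms on the left. If one wants to be thorough about completeness (that these are \emph{all} eigenpairs rather than merely \emph{some}), one should remark that the pencil $(A\otimes D+B\otimes C,\ B\otimes D)$ is regular because $B\otimes D$ is nonsingular, so counting dimensions suffices.
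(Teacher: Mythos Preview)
Your proof is correct and follows essentially the same approach as the paper: a direct verification using the mixed-product identity $(M\otimes N)(u\otimes v)=(Mu)\otimes(Nv)$ to factor out $(Bx_j)\otimes(Dy_k)$ from both sides. If anything, you go slightly further than the paper by commenting on completeness and the regularity of the pencil; the paper's proof stops at the verification step.
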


\begin{proof}
Let $(\lambda, x)$ be a generic eigenpair of $A x= \lambda B x$ and $(\mu, y)$ be a generic eigenpair of $C  y = \mu D  y$. Let $ z =  x \otimes  y$, we calculate that 
\begin{equation}
\begin{aligned}
(A \otimes D + B \otimes C)  z & = (A \otimes D + B \otimes C) ( x \otimes  y) \\
& = A x \otimes D y + B x \otimes C y \\
& = \lambda B x \otimes D y + B x \otimes \mu D y \\
& = (\lambda + \mu) (B x \otimes D y) \\
& = (\lambda + \mu) (B \otimes D) ( x \otimes  y) \\
& = (\lambda + \mu) (B \otimes D)  z,
\end{aligned}
\end{equation}
which completes the proof.
\end{proof}

\begin{remark}
Once the two sets of the eigenpairs are found, either numerically or analytically,  the eigenpairs for the GEVP in the form \eqref{eq:tp2} can be derived. 
A FEM (FDM, SEM, or IGA) discretization of $-\Delta u = \lambda u$ on unit square domain with a uniform tensor-product mesh leads to the GEVP in the form of \eqref{eq:tp2}. For three- or higher- dimensional problems, this result can be generalized  in a similar fashion. 
\end{remark}

\section{Trigonometric identities} \label{sec:ti}
In this section, we derive some trigonometric identities based on the eigenvector-eigenvalue identity that was rediscovered and coined recently in \cite{denton2021eigenvectors}. The eigenvector-eigenvalue identity for the EVP \eqref{eq:evp} is (see \cite[Theorem 1]{denton2021eigenvectors})
\begin{equation} \label{eq:evi}
| x_{j,k}|^2  \prod_{l=1,l\ne j}^n (\lambda_j - \lambda_l ) = \prod_{l=1}^{n-1} (\lambda_j - \mu_l^{(k)}), \quad j,k = 1,\cdots, n,
\end{equation}
where $A$ is a Hermitian matrix with dimension $n$, $(\lambda_j, x_j), j=1,\cdots,n,$ are eigenpairs of $A x = \lambda x$ with normalized eigenvectors $x_j = (x_{j,1}, \cdots, x_{j, n})^T$,  and $\mu_l^{(k)}$ is an eigenvalue of $A^{(k)}  y = \mu^{(k)}  y$ with
$A^{(k)}$ being the minor of $A$ formed by removing the $k^{\text{th}}$ row and column. 
We generalize this identity for the GEVPs as follows.

\begin{theorem}[Eigenvector-eigenvalue identity for the GEVP] \label{thm:gevi}
Let $A$ and $B$ be Hermitian matrices with dimension $n \times n$. 
Assume that $B$ is invertible. 
Let $(\lambda_j, x_j), j=1,\cdots, n,$ be the eigenpairs of the GEVP \eqref{eq:GEVP} with normalized eigenvectors $x_j = (x_{j,1}, \cdots, x_{j, n})^T$. Then, there holds
\begin{equation} \label{eq:gevi}
| x_{j,k}|^2 \prod_{l=1,l\ne j}^n (\lambda_j - \lambda_l) = \frac{\prod_{l=1}^{n-1} \eta_l^{(k)} }{\prod_{l=1,l\ne j}^n \eta_l } \cdot  \prod_{l=1}^{n-1} (\lambda_j - \mu_l^{(k)}), \quad j,k = 1,\cdots, n,
\end{equation}
where  $\mu_l^{(k)}$ is an eigenvalue of $A^{(k)}  y = \mu^{(k)} B^{(k)}  y$ with
$A^{(k)}$ and $B^{(k)}$ being minors of $A$ and $B$ formed by removing the $k^{\text{th}}$ row and column, respectively, $\eta_l$ is an eigenvalue of $B  y = \eta  y$, and $\eta_l^{(k)}$ is an eigenvalue of $B^{(k)}  y = \eta^{(k)}  y$.
\end{theorem}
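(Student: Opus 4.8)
The plan is to reduce the generalized eigenvector-eigenvalue identity \eqref{eq:gevi} to the known identity \eqref{eq:evi} for a standard Hermitian EVP by symmetrizing the pencil $(A,B)$. Since $B$ is Hermitian and invertible, but not necessarily positive definite, I would first treat the positive-definite case and then indicate the general modification. Write $B = L L^{*}$ (e.g.\ a Cholesky-type factorization, or $B = B^{1/2} B^{1/2}$ using the Hermitian square root when $B$ is positive definite), and set $\widetilde{A} = L^{-1} A L^{-*}$, which is Hermitian. The GEVP $A x = \lambda B x$ is equivalent to the EVP $\widetilde{A}\, \widetilde{x} = \lambda\, \widetilde{x}$ under the correspondence $\widetilde{x} = L^{*} x$; in particular $\widetilde{A}$ has the same eigenvalues $\lambda_1,\dots,\lambda_n$, and the normalized eigenvectors $x_j$ of the pencil (normalized so that $x_j^{*} B x_j = 1$, which is the natural normalization here) correspond to genuinely unit-norm eigenvectors $\widetilde{x}_j = L^{*} x_j$ of $\widetilde{A}$.

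The main step is then to track what happens to the left-hand and right-hand sides of \eqref{eq:evi} applied to $\widetilde{A}$ when translated back to the pencil language. First, $|\widetilde{x}_{j,k}|^2$ is \emph{not} equal to $|x_{j,k}|^2$ because the change of variables $\widetilde{x} = L^{*}x$ mixes coordinates; this is where the correction factor will originate, and I expect this to be the main obstacle — one cannot simply apply \eqref{eq:evi} coordinate-wise. The cleaner route is to avoid $\widetilde{x}_{j,k}$ entirely: instead, use the adjugate/resolvent characterization behind \eqref{eq:evi}. Recall that for the pencil, $\operatorname{adj}(zB - A)$ has $(k,k)$ entry equal to $\det(zB^{(k)} - A^{(k)})$ (the minor obtained by deleting row and column $k$), while $\det(zB - A) = \det(B)\prod_{l=1}^n (z-\lambda_l)$ and $\det(zB^{(k)}-A^{(k)}) = \det(B^{(k)})\prod_{l=1}^{n-1}(z - \mu_l^{(k)})$. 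Meanwhile, the spectral decomposition of the pencil gives $(zB-A)^{-1} = \sum_{j} \frac{x_j x_j^{*}}{z - \lambda_j}$ (with $B$-normalized $x_j$), so the $(k,k)$ entry of $(zB-A)^{-1}$ equals $\sum_j \frac{|x_{j,k}|^2}{z-\lambda_j}$. Combining,
\begin{equation*}
\sum_{j=1}^n \frac{|x_{j,k}|^2}{z-\lambda_j} = \frac{\det(zB^{(k)}-A^{(k)})}{\det(zB-A)} = \frac{\det(B^{(k)})}{\det(B)} \cdot \frac{\prod_{l=1}^{n-1}(z-\mu_l^{(k)})}{\prod_{l=1}^n (z-\lambda_l)}.
\end{equation*}
Taking the residue at $z = \lambda_j$ yields
\begin{equation*}
|x_{j,k}|^2 = \frac{\det(B^{(k)})}{\det(B)} \cdot \frac{\prod_{l=1}^{n-1}(\lambda_j-\mu_l^{(k)})}{\prod_{l=1,\,l\ne j}^n (\lambda_j-\lambda_l)}.
\end{equation*}

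Finally, I would rewrite the determinant ratio in terms of the eigenvalues named in the statement: $\det(B) = \prod_{l=1}^n \eta_l$ since $\eta_l$ are the eigenvalues of $B$, and $\det(B^{(k)}) = \prod_{l=1}^{n-1} \eta_l^{(k)}$ since $\eta_l^{(k)}$ are the eigenvalues of $B^{(k)}$. Substituting and rearranging gives exactly \eqref{eq:gevi}. I should note one subtlety to address: the resolvent expansion $(zB-A)^{-1} = \sum_j x_j x_j^{*}/(z-\lambda_j)$ requires the $B$-orthonormality relations $x_i^{*} B x_j = \delta_{ij}$ and completeness $\sum_j x_j x_j^{*} B = I$, which hold when $B$ is Hermitian positive definite (or, more generally, when the pencil is diagonalizable with a $B$-orthonormal eigenbasis); for indefinite Hermitian $B$ one can still factor $B = L J L^{*}$ with $J$ a signature matrix and carry through an analogous computation, or simply restrict to the positive-definite case relevant to the FEM/IGA mass matrices motivating the paper. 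If the eigenvalues are not simple the residue computation degenerates, so genericity (simple spectrum) is implicitly assumed, exactly as in \eqref{eq:evi}. Modulo these standard caveats, the argument is a direct transcription of the adjugate-identity proof of the classical case with the determinant of $B$ bookkept explicitly.
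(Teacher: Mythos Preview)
Your adjugate/resolvent argument is a clean and genuinely different route from the paper's proof, which instead follows the perturbative approach of \cite{mukherjee1989two,denton2021eigenvectors}: the paper perturbs $A\mapsto A+\epsilon e_k e_k^T$, expands the characteristic polynomial $Q^\epsilon(\lambda)=Q(\lambda)-\epsilon P^{(k)}(\lambda)+\mathcal{O}(\epsilon^2)$ via cofactor expansion, uses the first-order eigenvalue shift $\lambda_j^{\epsilon,k}=\lambda_j+\epsilon\,|x_{j,k}|^2/(x_j^*Bx_j)+\mathcal{O}(\epsilon^2)$, and matches the $\mathcal{O}(\epsilon)$ terms in $Q^\epsilon(\lambda_j^{\epsilon,k})=0$ to obtain $|x_{j,k}|^2 Q'(\lambda_j)=(x_j^*Bx_j)\,P^{(k)}(\lambda_j)$. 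Your approach via Cramer's rule and residues reaches the same destination more directly and makes the role of $\det(B^{(k)})/\det(B)$ transparent; the paper's approach has the advantage of bypassing the explicit resolvent expansion (and its attendant orthogonality hypotheses) at the cost of invoking perturbation theory.

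There is, however, a genuine mismatch in your final step. Your residue computation produces
\[
|x_{j,k}|^2 \prod_{l\ne j}(\lambda_j-\lambda_l)=\frac{\det(B^{(k)})}{\det(B)}\prod_{l=1}^{n-1}(\lambda_j-\mu_l^{(k)})=\frac{\prod_{l=1}^{n-1}\eta_l^{(k)}}{\prod_{l=1}^{n}\eta_l}\prod_{l=1}^{n-1}(\lambda_j-\mu_l^{(k)}),
\]
with all $n$ eigenvalues of $B$ in the denominator, whereas \eqref{eq:gevi} has only $\prod_{l\ne j}\eta_l$. The discrepancy is exactly a factor of $\eta_j$, and it comes from your choice of normalization: you take $x_j^*Bx_j=1$, while the paper takes $x_j^*x_j=1$ and records (in \eqref{eq:normb}) that $x_j^TBx_j=\eta_j$, which is precisely what supplies the extra $\eta_j$ in the perturbation formula. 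So your claim that ``substituting and rearranging gives exactly \eqref{eq:gevi}'' is not quite right as written; to match the stated identity you must either adopt the paper's Euclidean normalization and carry the factor $x_j^*Bx_j$ through the resolvent expansion (it then appears as a weight in $(zB-A)^{-1}=\sum_j \frac{x_jx_j^*}{(x_j^*Bx_j)(z-\lambda_j)}$), or else note explicitly that your formula is the $B$-normalized version of \eqref{eq:gevi} and differs from it by the factor $\eta_j$.
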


\begin{proof}
We follow the proof of \eqref{eq:evi} for $A x = \lambda x$  that uses perturbative analysis in \cite[Sect. 2.4]{denton2021eigenvectors} (first appeared in \cite{mukherjee1989two}). Firstly, since $B$ is invertible, $\det(B) \ne 0$ and hence $\eta_l \ne 0, l =1,\cdots, n$. Let $Q(\lambda)$ be the characteristic polynomial of the GEVP \eqref{eq:GEVP}. Then, 
\begin{equation} \label{eq:Qa}
Q(\lambda) = \det(\lambda B - A) = \det(B) \det(\lambda I  - B^{-1}A) = \det(B)  \prod_{l=1}^n (\lambda - \lambda_l).
\end{equation}
The derivative $Q'(\lambda_j)$ of $Q(\lambda)$ at $\lambda = \lambda_j$ is 
\begin{equation} \label{eq:gi1}
Q'(\lambda_j) = \det(B) \prod_{l=1, l \ne j}^n (\lambda_j - \lambda_l).
\end{equation}
Similarly, let $P^{(k)}(\mu^{(k)})$ be the characteristic polynomial of the GEVP $A^{(k)}  y = \mu^{(k)} B^{(k)}  y$. Then, 
\begin{equation} \label{eq:gi2}
P^{(k)}(\mu^{(k)}) =  \det(B^{(k)})  \prod_{l=1}^{n-1} (\mu^{(k)}  - \mu^{(k)}_l).
\end{equation}

Now, with the limiting argument, we assume that $A$ has simple eigenvalues. Let $\epsilon$ be a small parameter and we define the perturbed matrix
\begin{equation}
A^{\epsilon,k} = A + \epsilon e_k e_k^T, \qquad k = 1,\cdots, n,
\end{equation}
where $\{ e_k\}_{k=1}^n$ is the standard basis. The perturbed GEVP is defined as
\begin{equation} \label{eq:GEVPp}
A^{\epsilon,k}  x^\epsilon = \lambda^{\epsilon,k} B x^\epsilon.
\end{equation}
Using \eqref{eq:Qa} and cofactor expansion, the characteristic polynomial of this  perturbed GEVP can be expanded as
\begin{equation}
Q^\epsilon(\lambda) = \det (\lambda B - A^{\epsilon,k} ) = Q(\lambda)  - \epsilon P^{(k)}(\lambda) + \mathcal{O}(\epsilon^2).
\end{equation}
With $x_j$ being a normalized eigenvector, one has 
\begin{equation} \label{eq:normb}
x_j^T \cdot x_j  = 1, \qquad x_j^T B x_j  = \eta_j, \qquad j = 1, \cdots, n.
\end{equation}
Using this normalization, from perturbation theory  \cite{konstantinov2003perturbation},  the eigenvalue $\lambda^\epsilon_j$ of \eqref{eq:GEVPp} can be expanded as
\begin{equation}
\lambda^{\epsilon,k} =  \lambda_j + \frac{\epsilon}{\eta_j} |x_{j,k}|^2 + \mathcal{O}(\epsilon^2).
\end{equation}
Applying the Taylor expansion and $ Q(\lambda_j)=0$, we rewrite 
\begin{equation}
\begin{aligned}
0 = Q^\epsilon(\lambda^{\epsilon,k}) & = Q(\lambda^{\epsilon,k})  - \epsilon P^{(k)}(\lambda^{\epsilon,k}) + \mathcal{O}(\epsilon^2) \\
& = Q(\lambda_j) +  \frac{\epsilon}{\eta_j} |x_{j,k}|^2 Q'(\lambda_j) - \epsilon P^{(k)}(\lambda_j) + \mathcal{O}(\epsilon^2) \\
& = \frac{\epsilon}{\eta_j} |x_{j,k}|^2 Q'(\lambda_j) - \epsilon P^{(k)}(\lambda_j) + \mathcal{O}(\epsilon^2), \\
\end{aligned}
\end{equation}
which the linear term in $\epsilon$ leads to 
\begin{equation} \label{eq:gi3}
|x_{j,k}|^2 Q'(\lambda_j) = \eta_j P^{(k)}(\lambda_j).
\end{equation}

Applying \eqref{eq:gi1} and  \eqref{eq:gi2} with $\det(B) = \prod_{l=1}^n \eta_l$ and $\det(B^{(k)}) = \prod_{l=1}^{n-1} \eta_l^{(k)}$  to \eqref{eq:gi3} completes the proof.
\end{proof}

The identity \eqref{eq:gevi} can be rewritten in terms of the characteristic polynomials as \eqref{eq:gi3}.
A similar identity in terms of determinants, eigenvalues, and rescaled eigenvectors was presented in \cite[eqn. 18]{kausel2018normalized} for real-valued matrices. The identity \eqref{eq:gevi} is in terms of only eigenvalues and eigenvectors. 

Based on these two identities \eqref{eq:evi} and \eqref{eq:gevi}, one can easily derive the following trigonometric identities.
For EVP, using \eqref{eq:evi} and applying Theorem \ref{thm:set1} with $m=1$ and $B$ being an identity matrix, we have 
\begin{equation} \label{eq:ti31}
\frac{2}{n+1} \sin^2 \frac{k \pi}{n+1}  =  \dfrac{ \prod_{j=1}^{n-1} \Big(\cos \frac{k \pi}{n+1}  - \cos \frac{j \pi}{n} \Big)}{ \prod_{j=1,j\ne k}^{n} \Big(\cos \frac{k \pi}{n+1}  - \cos \frac{j \pi}{n+1} \Big) }, \ n \ge 2, \ k =1,\cdots, n.
\end{equation}
We note that this identity is independent of the matrix entries $\alpha_0$ and $\alpha_1$. The left hand side can be written in terms of a cosine function as $\frac{1}{n+1} ( 1 - \cos \frac{2 k \pi}{n+1})$ to have an identity in terms of only cosine functions.   For example, let $n=2, k=1$, then the identity boils down to
\begin{equation}
\frac12 = \frac23 \sin^2 \frac{\pi}{3} = \frac{\cos(\pi/3) - \cos(\pi/2)}{ \cos(\pi/3) - \cos(2\pi/3) } = \frac{1/2 - 0}{1/2 + 1/2} = \frac12.
\end{equation}

Similarly, we have for $n \ge 2, \ k =1,\cdots, n, l = 2, \cdots, n-1$ 
\begin{equation} \label{eq:ti3}
\frac{2}{n+1} \sin^2 \frac{k l \pi}{n+1}  =  \dfrac{ \prod_{j=1}^{l-1} \Big(\cos \frac{k \pi}{n+1}  - \cos \frac{j \pi}{l} \Big)  \prod_{j=l}^{n-1} \Big(\cos \frac{k \pi}{n+1}  - \cos \frac{(j-l+1) \pi}{n-l+1} \Big) }{ \prod_{j=1,j\ne k}^{n} \Big(\cos \frac{k \pi}{n+1}  - \cos \frac{j \pi}{n+1} \Big) }.
\end{equation}
If we introduce the notation that $\prod_{j=1}^0 (\cdot) = 1$, then \eqref{eq:ti31} can be written as \eqref{eq:ti3} with $l=1$ or $l=n$. 

For the GEVP, using \eqref{eq:gevi} and applying Theorem \ref{thm:set1} with $m=1$, we have 
for $n \ge 2, \ l, k =1,\cdots, n,$ 
\begin{equation} \label{eq:ti3g}
\frac{2}{n+1} \sin^2 \frac{k l \pi}{n+1}  =  \dfrac{ \prod_{j=1}^{n-1} \Big( \beta_0 + 2\beta_1 \cos \frac{j \pi}{n} \Big)}{ \prod_{j=1,j\ne k}^{n} \Big( \beta_0 + 2 \beta_1\cos \frac{j \pi}{n+1} \Big) } \cdot \frac{ \Pi_1 \Pi_2 }{\Pi_3},
\end{equation}
where
\begin{equation}
\begin{aligned}
\Pi_1 & = \prod_{j=1}^{l-1} \Big( \frac{\alpha_0 + 2 \alpha_1\cos \frac{k \pi}{n+1}  }{\beta_0 + 2 \beta_1\cos \frac{k \pi}{n+1} }
- \frac{\alpha_0 + 2 \alpha_1\cos \frac{j \pi}{l}  }{\beta_0 + 2 \beta_1\cos \frac{j \pi}{l} }    \Big), \\
\Pi_2 & = \prod_{j=l}^{n-1} \Big( \frac{\alpha_0 + 2 \alpha_1\cos \frac{k \pi}{n+1}  }{\beta_0 + 2 \beta_1\cos \frac{k \pi}{n+1} }
- \frac{\alpha_0 + 2 \alpha_1\cos \frac{(j-l+1) \pi}{n-l+1}  }{\beta_0 + 2 \beta_1\cos \frac{(j-l+1) \pi}{n-l+1} }    \Big), \\
\Pi_1 & = \prod_{j=1, j\ne k}^{n} \Big( \frac{\alpha_0 + 2 \alpha_1\cos \frac{k \pi}{n+1}  }{\beta_0 + 2 \beta_1\cos \frac{k \pi}{n+1} }
- \frac{\alpha_0 + 2 \alpha_1\cos \frac{j \pi}{n+1}  }{\beta_0 + 2 \beta_1\cos \frac{j \pi}{n+1} }    \Big).
\end{aligned}
\end{equation}

It is obvious that \eqref{eq:ti3g} reduces to \eqref{eq:ti3} when $B$ is an identity matrix (or multiplied by a nonzero constant). 

\begin{remark}
Other similar trigonometric identities can be established. Moreover, Theorems \ref{thm:set1}--\ref{thm:femp2} give various analytical eigenpairs. An application of the eigenvector-eigenvalue identity \eqref{eq:evi} along with these analytical results set up a system of equations governing the eigenvalues of the minors of the original matrices. Thus, the eigenvalues of these minors can be found by solving the system of equations. 
\end{remark}

\section{Concluding remarks} \label{sec:conclusion} 

We first remark that the ideas of finding analytical solutions to the GEVPs with Toeplitz-plus-Hankel and corner-overlapped block-diagonal matrices can be applied to solve other problems where a particular solution form is sought. 
Other applications include matrix representations of differential operators such as the Schrödinger operator in quantum mechanics \cite{deng2019isogeometric} and the $2n$-order operators \cite{deng2019optimal}. 
Moreover, the idea to solve QEVP can be applied to solve other nonlinear EVPs. 

The boundary modifications in the Toeplitz-plus-Hankel matrices give new insights for designing better numerical methods. For example, the high-order IGA (cf.,\cite{hughes2008duality}) produces outliers in the high-frequency region of the spectrum. A method which modifies the boundary terms to arrive at the Toeplitz-plus-Hankel matrices will be outlier-free \cite{deng2020boundary}.
For FDM, the structure of the Toeplitz-plus-Hankel matrices gives insights to the design better higher-order approximations near the domain boundaries. Lastly, we remark that the corner-overlapped block-diagonal matrices have applications in the FEMs and the discontinuous Galerkin methods.

\section*{Acknowledgments} 
The author thanks Professor Gilbert Strang for several discussions on the Toeplitz-plus-Hankel matrices and on the potential applications to the design of better numerical methods.


\bibliographystyle{siam}


%

\bibliography{ref}
\end{document}